\providecommand{\tabularnewline}{\\}
 \theoremstyle{plain}    
 \newtheorem{thm}{Theorem}[section]
 \numberwithin{equation}{section} %% Comment out for sequentially-numbered
 \numberwithin{figure}{section} %% Comment out for sequentially-numbered
 \theoremstyle{plain}
 \theoremstyle{definition}
 \newtheorem{defn}[thm]{Definition}
 \theoremstyle{plain}    
 \newtheorem{prop}[thm]{Proposition} %%Delete [thm] to re-start numbering
 \theoremstyle{plain}    
 \newtheorem{cor}[thm]{Corollary} %%Delete [thm] to re-start numbering
 \theoremstyle{definition}
  \newtheorem{example}[thm]{Example}
\begin{document}

\title{Rectangular Polyomino Set Weak (1,2)-achievement Games}

\author{Edgar Fisher}

\author{N\'andor Sieben}

\begin{abstract}
In a polyomino set (1,2)-achievement game the maker and the breaker
alternately mark one and two previously unmarked cells respectively.
The maker's goal is to mark a set of cells congruent to one of a given
set of polyominoes. The breaker tries to prevent the maker from achieving
his goal. The teams of polyominoes for which the maker has a winning
strategy is determined up to size 4. In set achievement games, it
is natural to study infinitely large polyominoes. This enables the
construction of super winners that characterize all winning teams
up to a certain size. 
\end{abstract}

\address{Northern Arizona University, Department of Mathematics and Statistics}

\email{edgar.fisher@nau.edu}

\email{nandor.sieben@nau.edu}

\keywords{achievement games, polyomino}

\subjclass{05B50, 91A46}

\maketitle

\section{Introduction}

A \emph{rectangular board} is the set of \emph{cells} that are the
translations of the unit square $[0,1]\times[0,1]$ by vectors of
$\mathbf{Z}^{2}$. Informally, a rectangular board is the infinite
chessboard. Two cells are called \emph{adjacent} if they share a common
edge. A \emph{polyomino} (or animal) is a subset of the rectangular
board in which the cells are connected through adjacent cells. Note
that we allow infinitely many cells in a polyomino. We only consider
polyominoes up to congruence, that is, the location of the polyomino
on the board is not important. Rotations and reflections are also
allowed. The number of cells of a polyomino is called the \emph{size}
of the polyomino.

In a \emph{polyomino set $(p,q)$-achievement game} two players alternately
mark $p$ and $q$ previously unmarked cells of the board using their
own colors. If $p$ or $q$ is not 1 then the game is often called
\emph{biased}. In a regular game, the player who first marks a polyomino
congruent to one of a given set of finite polyominoes wins the game.
In a \emph{weak set achievement} game the second player (\emph{the
breaker}) only tries to prevent the first player (\emph{the maker})
from achieving one of the polyominoes. A set of finite polyominoes
is called a \emph{winning set} if the maker has a winning strategy
to achieve this set. Otherwise the set is called a \emph{losing set}.
Polyomino achievement games were introduced by Harary \cite{gardner.martin:mathematical,harary:achieving,harary:is,harary.harborth.ea:handicap}.
Winning strategies on rectangular boards can be found in \cite{bode.harborth:hexagonal,sieben.deabay:polyomino}.
Biased games are studied in \cite{beck:biased} in a more general
setting. Biased games are needed \cite{sieben:snaky} to apply the
theory of weight functions \cite{beck:remarks,erdos.selfridge:on}
to unbiased games on infinite boards.

In this paper we study rectangular weak set $(1,2)$-achievement games.
Triangular unbiased set achievement games were studied in \cite{bode.harborth:triangle}.
Our purpose is to further develop the theory of set achievement games.
We have chosen the rectangular game because the rectangular board
is the most intuitive. The unbiased rectangular set game is very complex.
To handle this difficulty we have chosen a biased version to limit
the number of winning sets. The $(1,2)$ game is still rich enough
to uncover many of the unexpected properties of set games. This approach
also has its challenges, since the $(1,2)$ game needs new tools for
finding winning strategies.

\section{Preliminaries}

\begin{figure}[h]
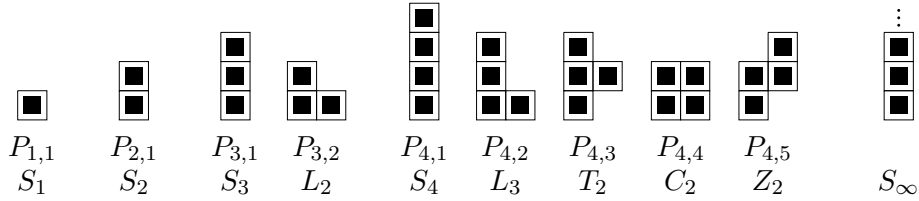

\begin{tabular}{ccccccccccccccc}
\includegraphics{1-1.eps}&
&
\includegraphics{2-1.eps}&
&
\includegraphics{3-1.eps}&
\includegraphics{3-2.eps}&
&
\includegraphics{4-1.eps}&
\includegraphics{4-2.eps}&
\includegraphics{4-3.eps}&
\includegraphics{4-4.eps}&
\includegraphics{4-5.eps}&
&
&
\includegraphics{omega-1.eps}\tabularnewline
$P_{1,1}$&
&
$P_{2,1}$&
&
$P_{3,1}$&
$P_{3,2}$&
&
$P_{4,1}$&
$P_{4,2}$&
$P_{4,3}$&
$P_{4,4}$&
$P_{4,5}$&
&
&
\tabularnewline
$S_{1}$&
&
$S_{2}$&
&
$S_{3}$&
$L_{2}$&
&
$S_{4}$&
$L_{3}$&
$T_{2}$&
$C_{2}$&
$Z_{2}$&
&
&
$S_{\infty}$\tabularnewline
\end{tabular}

\caption{\label{cap:Polyominoes-up-to}All polyominoes up to size 4 together
with infinite skinny.}
\end{figure}

Figure~\ref{cap:Polyominoes-up-to} shows some polyominoes we are
going to use. In this figure, the polyominoes are in standard position.
Roughly speaking, a polyomino is in standard position if its cells
are as much to the left and to the bottom as possible. The exact definition
involves the lexicographic order of the list of coordinates of the
cells of the polyomino pushed against the coordinate axes in the first
quadrant. The naming convention comes from the ordering of the polyominoes
by size and by lexicographic order of their standard position. 

We use special names for several important classes of polyominoes.
These names are also given in the figure. The name $S_{n}=P_{n,1}$
stands for the skinny polyomino of size $n$. The names $C_{n}$,
$L_{n}$, $T_{n}$ and $Z_{n}$ are chosen because the shape of those
polyominoes is similar to the shape of letters. Note that only one
end of $S_{\infty}$ is infinitely long. 

\begin{defn}
A set of polyominoes is called \emph{bounded} if it contains only
finite polyominoes. It is called \emph{unbounded} if it contains at
least one infinite polyomino.
\end{defn}
Note that an infinite set of finite polyominoes is still called bounded
even though the size of a polyomino in the set can be arbitrarily
large.

\begin{defn}
We say the polyomino $P$ is an \emph{ancestor} of the polyomino $Q$
if $Q$ can be constructed from $P$ by adding some (possibly none)
extra cells. We use the notation $P\sqsubseteq Q$. A set $\mathcal{F}$
of polyominoes is called a \emph{team} if no element of $\mathcal{F}$
is the ancestor of another element of $\mathcal{F}$.
\end{defn}
It is easy to see that the ancestor relation is reflexive and transitive.
It is not antisymmetric, the polyominoes in Figure \ref{cap:Two-polyominoes-which}
are ancestors of each other. The relation is antisymmetric on finite
polyominoes and so is a partial order on the set of finite polyominoes.

\begin{figure}
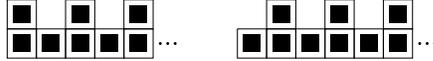

\includegraphics{comb.eps}$\qquad$\includegraphics{comb2.eps}

\caption{\label{cap:Two-polyominoes-which}Two polyominoes which are ancestors
of each other.}
\end{figure}

So far we have not defined the term winner for an unbounded set of
polyominoes. An infinite polyomino cannot be marked during a finite
game. We still want to talk about unbounded winners to simplify the
theory, even though we do not intend to play any games with unbounded
sets.

\begin{defn}
Let $\mathcal{T}$ be an unbounded set of polyominoes. Let $F_{T}$
be a finite ancestor of $T$ for all $T\in\mathcal{T}$. Then $\mathcal{F}=\{ F_{T}\mid T\in\mathcal{T}\}$
is called a \emph{bounded restriction} of $\mathcal{T}$. An unbounded
set of polyominoes is called a \emph{winner} if each bounded restriction
of the set is a winner.
\end{defn}

\section{Preorder}

There are two ways to make it easier to achieve a set of polyominoes.
We can make some of the polyominoes smaller or we can include more
polyominoes in the set. This motivates the following definition. 

\begin{defn}
Let $\mathcal{S}$ and $\mathcal{T}$ be sets of polyominoes. We say
$\mathcal{S}$ is \emph{simpler} than $\mathcal{T}$ if for all $Q\in\mathcal{T}$
there is a $P\in\mathcal{S}$ such that $P\sqsubseteq Q$. We use
the notation $\mathcal{S}\preceq T$. 
\end{defn}
The terminology \emph{at least} and \emph{at most} was used in \cite{bode.harborth:triangle}
for what we call simpler. Note that $\mathcal{S}$ is simpler then
$\mathcal{T}$ if $\mathcal{S}$ is simpler to achieve than $\mathcal{T}$.
It is easy to see that the simpler relation is reflexive and transitive
and so is a preorder. It is also easy to see that a bounded restriction
of an unbounded set of polyominoes is simpler than the original set.
The following result shows the importance of the preorder.

\begin{prop}
\label{pro:order}Let $\mathcal{S}$ and $\mathcal{T}$ be sets of
polyominoes such that $\mathcal{S}\preceq\mathcal{T}$. If $\mathcal{T}$
is a winner then so is $\mathcal{S}$. If $\mathcal{S}$ is a loser
then so is $\mathcal{T}$.
\end{prop}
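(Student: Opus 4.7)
The plan is to prove both statements at once by observing that the second is the contrapositive of the first, so it suffices to show that if $\mathcal{S}\preceq\mathcal{T}$ and $\mathcal{T}$ is a winner then $\mathcal{S}$ is a winner. In the bounded case (both $\mathcal{S}$ and $\mathcal{T}$ bounded) I would just let the maker play his winning strategy for $\mathcal{T}$. At some finite stage his marked cells contain a congruent copy of some $Q\in\mathcal{T}$, and by hypothesis there is $P\in\mathcal{S}$ with $P\sqsubseteq Q$, so $Q$ is obtained from $P$ by adding cells. A corresponding copy of $P$ therefore already sits inside his marked cells, and the same play achieves $\mathcal{S}$.

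The unbounded case reduces to the bounded one through bounded restrictions. First suppose $\mathcal{S}$ is bounded but $\mathcal{T}$ is not. For each $Q\in\mathcal{T}$ pick $P_Q\in\mathcal{S}$ with $P_Q\sqsubseteq Q$; the boundedness of $\mathcal{S}$ forces every $P_Q$ to be finite, so $\mathcal{F}:=\{P_Q\mid Q\in\mathcal{T}\}$ is a bounded restriction of $\mathcal{T}$ and hence a winner. Since $\mathcal{F}\subseteq\mathcal{S}$, reflexivity of $\sqsubseteq$ yields $\mathcal{S}\preceq\mathcal{F}$, and the bounded case shows $\mathcal{S}$ is a winner. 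Now suppose $\mathcal{S}$ is also unbounded and let $\mathcal{F}_{\mathcal{S}}=\{F_P\mid P\in\mathcal{S}\}$ be an arbitrary bounded restriction of $\mathcal{S}$. For each $Q\in\mathcal{T}$ again choose $P_Q\in\mathcal{S}$ with $P_Q\sqsubseteq Q$ and set $G_Q:=F_{P_Q}$; transitivity of $\sqsubseteq$ gives $G_Q\sqsubseteq Q$, and every $G_Q$ is finite, so $\{G_Q\mid Q\in\mathcal{T}\}$ is a bounded restriction of $\mathcal{T}$ and hence a winner. Since this collection is contained in $\mathcal{F}_{\mathcal{S}}$, the preceding paragraph makes $\mathcal{F}_{\mathcal{S}}$ a winner, and because the restriction was arbitrary $\mathcal{S}$ itself is a winner.

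The main obstacle is not any individual hard step but the organizational one of keeping straight which polyomino is serving as a finite ancestor of which set. The decisive ingredient is the transitivity of $\sqsubseteq$, which lets a finite ancestor of an element of $\mathcal{S}$ double as a finite ancestor of the corresponding element of $\mathcal{T}$, and thereby allows the hypothesis on $\mathcal{T}$ to be applied to collections constructed from ancestors chosen to control $\mathcal{S}$.
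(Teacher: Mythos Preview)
Your proof is correct and follows essentially the same three-case decomposition as the paper: both bounded; $\mathcal{S}$ bounded, $\mathcal{T}$ unbounded; $\mathcal{S}$ unbounded. The only cosmetic differences are that in the second case the paper keeps finite $T\in\mathcal{T}$ unchanged (setting $F_T=T$) rather than replacing every element by one from $\mathcal{S}$, and in the third case the paper simply invokes transitivity of $\preceq$ (writing $\mathcal{E}\preceq\mathcal{S}\preceq\mathcal{T}$ for an arbitrary bounded restriction $\mathcal{E}$ of $\mathcal{S}$) and then appeals to the earlier cases, whereas you rebuild that step explicitly at the level of $\sqsubseteq$.
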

\begin{proof}
First assume that $\mathcal{S}$ and $\mathcal{T}$ are bounded. If
$\mathcal{T}$ is a winner then during a game the maker is able to
mark the cells of some $Q\in\mathcal{T}$. There is a $P\in\mathcal{S}$
such that $P\sqsubseteq Q$, so by the time the maker marks the cells
of $Q$ he also marked the cells of $P$, possibly at an earlier stage. 

Next assume that $\mathcal{S}$ is bounded and $\mathcal{T}$ is unbounded.
For each $T\in\mathcal{T}$ define $F_{T}=T$ if $T$ is finite and
define $F_{T}$ to be an element of $\mathcal{S}$ such that $F_{T}\sqsubseteq T$
if $T$ is infinite. Then $\mathcal{F}=\{ F_{T}\mid T\in\mathcal{T}\}$
is a bounded restriction of $\mathcal{T}$. $\mathcal{S}$ is simpler
than $\mathcal{F}$ and $\mathcal{F}$ is a winner and so $\mathcal{S}$
is also a winner.

Finally assume that $\mathcal{S}$ is unbounded. Let $\mathcal{E}$
be a bounded restriction of $\mathcal{S}$. Then $\mathcal{E}\preceq\mathcal{S}\preceq\mathcal{T}$
and so $\mathcal{E}$ is a winner which implies that $\mathcal{S}$
is a winner.

The second statement of the proposition is the contrapositive of the
first statement.
\end{proof}
\begin{defn}
Let $\mathcal{S}$ be a bounded set of polyominoes. The set $\mathcal{L}(\mathcal{S})$
of minimal elements of $\mathcal{S}$ in the partial order is called
the \emph{legalization} of $\mathcal{S}$.
\end{defn}
It is clear that $\mathcal{L}(\mathcal{S})$ is a team. 

\begin{prop}
\label{pro:legal}Let $\mathcal{S}$ be a bounded set of polyominoes.
$\mathcal{S}$ is a winner if and only if $\mathcal{L}(\mathcal{S})$
is a winner.
\end{prop}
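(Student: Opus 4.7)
The plan is to show that $\mathcal{S}$ and $\mathcal{L}(\mathcal{S})$ are equivalent under the preorder $\preceq$, that is, $\mathcal{S}\preceq\mathcal{L}(\mathcal{S})$ and $\mathcal{L}(\mathcal{S})\preceq\mathcal{S}$, and then apply Proposition~\ref{pro:order} in both directions to conclude the biconditional.

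For the easy direction $\mathcal{S}\preceq\mathcal{L}(\mathcal{S})$, I would simply use the fact that $\mathcal{L}(\mathcal{S})\subseteq\mathcal{S}$ together with reflexivity of $\sqsubseteq$: given any $Q\in\mathcal{L}(\mathcal{S})$, the element $Q$ itself lies in $\mathcal{S}$ and satisfies $Q\sqsubseteq Q$. For the reverse direction $\mathcal{L}(\mathcal{S})\preceq\mathcal{S}$, given $Q\in\mathcal{S}$, I would need to produce a $P\in\mathcal{L}(\mathcal{S})$ with $P\sqsubseteq Q$. This is where boundedness is used: since $Q$ is finite, only finitely many polyominoes (up to congruence) can satisfy $P\sqsubseteq Q$, because any such $P$ has at most $|Q|$ cells. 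Hence the set $\{P\in\mathcal{S}\mid P\sqsubseteq Q\}$ is a finite poset under the restriction of $\sqsubseteq$, which is a genuine partial order on finite polyominoes, and therefore has a minimal element $P$. A short check shows that $P$ is in fact minimal in all of $\mathcal{S}$: any strictly smaller $P'\in\mathcal{S}$ would satisfy $P'\sqsubseteq P\sqsubseteq Q$ by transitivity, placing $P'$ in the finite ancestor set and contradicting the minimality of $P$ there. Hence $P\in\mathcal{L}(\mathcal{S})$.

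Having established both $\mathcal{S}\preceq\mathcal{L}(\mathcal{S})$ and $\mathcal{L}(\mathcal{S})\preceq\mathcal{S}$, Proposition~\ref{pro:order} gives: if $\mathcal{L}(\mathcal{S})$ is a winner then so is $\mathcal{S}$, and if $\mathcal{S}$ is a winner then so is $\mathcal{L}(\mathcal{S})$. The only real content of the argument is the existence of a minimal ancestor inside $\mathcal{S}$ below any given element, and that step is where the bounded hypothesis is indispensable; without it, one could have infinite descending chains of ancestors and the legalization might fail to be simpler than $\mathcal{S}$. I do not anticipate any genuine obstacle beyond being careful to invoke antisymmetry only on finite polyominoes.
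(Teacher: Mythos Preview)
Your proposal is correct and follows essentially the same approach as the paper: establish $\mathcal{S}\preceq\mathcal{L}(\mathcal{S})$ from the subset relation and $\mathcal{L}(\mathcal{S})\preceq\mathcal{S}$ by finding a minimal ancestor in $\mathcal{S}$ below any given $Q$, then invoke Proposition~\ref{pro:order}. Your argument is in fact slightly more explicit than the paper's on the one nontrivial point---you justify the existence of the minimal ancestor by observing that a finite $Q$ has only finitely many ancestors, whereas the paper simply asserts the existence of a minimal $R$ and defers the discussion of why boundedness matters to a remark after the proof.
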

\begin{proof}
Since $\mathcal{L}(\mathcal{S})$ is a subset of $\mathcal{S}$, we
must have $\mathcal{S}\preceq\mathcal{L}(\mathcal{S})$. On the other
hand, consider $Q\in\mathcal{S}$. If $Q$ is minimal then $Q\in\mathcal{L}(\mathcal{S})$.
If $Q$ is not minimal then there is a minimal $R\in\mathcal{S}$
such that $R\sqsubseteq Q$ and so $R\in\mathcal{L}(\mathcal{S})$.
This shows that $\mathcal{S}\succeq\mathcal{L}(\mathcal{S})$. The
result now follows from Proposition~\ref{pro:order}.
\end{proof}
Note that the existence of the minimal $R$ in the proof is not guaranteed
if $\mathcal{S}$ is unbounded. There could be an infinite chain $Q_{1}\sqsupseteq Q_{2}\sqsupseteq\cdots$
of simpler and simpler polyominoes without a minimal polyomino. This
means that we cannot talk about the legalization of an unbounded set
of polyominoes.

Proposition~\ref{pro:legal} allows us to concentrate on teams instead
of sets of polyominoes in order to classify sets of finite polyominoes
as winners or losers.

\section{Winning teams}

The \emph{exterior perimeter} of a polyomino is the number of empty
cells adjacent to the polyomino. The minimum exterior perimeter of
the polyominoes in a finite set $\mathcal{F}$ is denoted by $\varepsilon(\mathcal{F})$.
The \emph{full team} $\mathcal{F}_{s}$ is the set containing all
polyominoes of size $s$.

\begin{prop}
\label{pro:The-full-family}The full team $\mathcal{F}_{s}$ is a
winner for $s\le4$. In fact the maker can win after $s$ marks. 
\end{prop}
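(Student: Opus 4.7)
The plan is to give the maker an explicit greedy strategy and verify it with a counting argument on exterior perimeter. For $s=1$ the claim is immediate, since the maker wins on his first move. For $s\in\{2,3,4\}$ I would have the maker play as follows: on his $k$-th move he marks any unmarked cell adjacent to the polyomino consisting of his previously marked cells. If such a cell always exists, then after his $s$-th move he has marked a connected region of size $s$, which is a polyomino in $\mathcal{F}_s$, so he wins in exactly $s$ moves.

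The only step requiring verification is the existence of the adjacent free cell. Just before the maker's $k$-th move, the maker holds a polyomino of size $k-1$ and the breaker has made $2(k-1)$ marks in total. A free neighbor of the maker's polyomino is guaranteed whenever the exterior perimeter of every polyomino of size $k-1$ strictly exceeds $2(k-1)$. I would dispatch this by direct enumeration: a single cell has exterior perimeter $4$, the unique domino has exterior perimeter $6$, and the two size-$3$ polyominoes have exterior perimeters $7$ ($L_2$) and $8$ ($S_3$). The inequalities $4>2$, $6>4$, $7>6$ cover $k=2,3,4$ respectively.

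I do not expect any real obstacle: the whole argument reduces to a handful of perimeter counts. The noteworthy point is that the $k=4$ step is tight, since $L_2$ exceeds $2(4-1)=6$ by only one, so in the worst case the maker has a single legal extension available. This is also why the proposition is limited to $s\le 4$: the square $C_2$ has exterior perimeter exactly $8=2(5-1)$, so the greedy argument fails for $s=5$ and some different idea would be needed at that point.
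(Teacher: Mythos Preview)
Your proof is correct and is essentially identical to the paper's: the paper calls your greedy extension the \emph{random neighbor strategy} and verifies it via the same exterior-perimeter count, recording $\varepsilon(\mathcal{F}_1)=4$, $\varepsilon(\mathcal{F}_2)=6$, $\varepsilon(\mathcal{F}_3)=7$ (and $\varepsilon(\mathcal{F}_4)=8$, which as you note shows the argument does not extend to $s=5$).
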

\begin{proof}
The maker can win after $s$ marks with the \emph{random neighbor
strategy} \cite{sieben:polyominoes}, which requires him to place
his mark at a randomly chosen cell adjacent to one of his previous
marks. The strategy works because $\varepsilon(\mathcal{F}_{1})=4$,
$\varepsilon(\mathcal{F}_{2})=6$, $\varepsilon(\mathcal{F}_{3})=7$
and $\varepsilon(\mathcal{F}_{4})=8$ and so $\varepsilon(\mathcal{F}_{s})$
is not larger than the number of cells marked by the breaker, which
is $2s$ after $s$ moves.
\end{proof}
It is not hard to see that $\mathcal{F}_{4}$ remains a winner if
we replace $\mathcal{S}_{4}$ by a larger skinny polyomino.

\begin{prop}
\label{pro:W_n}The team $\mathcal{W}_{n}=\{ S_{n+1},T_{2},C_{2},\ldots,C_{n},Z_{2},\ldots,Z_{n}\}$
with the polyominoes in Figure~\ref{cap:Infinite-winner} is a winner
for all $n\ge3$.
\end{prop}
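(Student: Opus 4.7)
The plan is to exhibit an explicit maker strategy built around growing a horizontal line. Because the board is infinite, the maker opens in a region far from any of the breaker's previous marks, so that the game effectively starts localized around his first cell. On each subsequent turn, the maker tries to extend his existing marks into a longer horizontal row, aiming at $S_{n+1}$; if the row cannot be extended, he pivots perpendicular to the row, aiming instead at $T_{2}$, some $C_{k}$, or some $Z_{k}$.

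The core counting argument runs in the spirit of the proof of Proposition~\ref{pro:The-full-family}. If after his $k$-th move the maker's marks form a row of length $k$, the row has $2k+2$ unmarked neighboring cells, namely two endpoint extensions and $2k$ perpendicular neighbors, while the breaker has only $2k$ marks in total; hence at least two boundary cells are free. If one is an endpoint, the maker extends the row; otherwise both endpoints are blocked, at least two perpendicular neighbors are free, and the maker pivots at one of them. The invariant to maintain is that after every maker move the marked polyomino is an ancestor of some element of $\mathcal{W}_{n}$: extension toward $S_{n+1}$ preserves this directly, and the claim about the pivot is that pivoting a row of length $k\ge 3$ at an interior cell yields an ancestor of $T_{2}$ (realized outright when $k=3$), while pivoting at an endpoint yields an L-shape that is an ancestor of $C_{j}$ or $Z_{j}$ for a suitable $j\in\{2,\ldots,n\}$ chosen according to which side of the row the breaker has concentrated his marks.

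The main obstacle is the case analysis for the pivot phase: for each arrangement of breaker cells around the row at the moment the maker pivots, one must choose the pivot direction so as to point toward a still-completable target in $\mathcal{W}_{n}$, and then show by a similar boundary-counting estimate applied to the new pivoted shape that the maker can drive his structure all the way to that target. The inclusion of both $C_{k}$ and $Z_{k}$ for each $k$ from $2$ to $n$ is exactly what allows this: whichever side of the row the breaker has leaned toward, one of these two shapes aligns with the free side of the maker's structure and can be forced to completion before the breaker's remaining marks catch up.
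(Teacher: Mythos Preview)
Your overall plan—grow a straight segment and then bend it—is the same idea the paper uses, and your perimeter count for the row phase is correct. But the proposal has a genuine gap at exactly the point you flag as ``the main obstacle'': you describe what the pivot-phase case analysis must accomplish and then assert that having both $C_k$ and $Z_k$ available makes it go through, without actually carrying it out. A pure boundary-counting estimate of the kind you used for the row is not enough here. Once you pivot to an L-shape $L_{k}$, completing it to $C_{k}$ or $Z_{k}$ requires one specific cell at the far end on each side; the breaker's very next two marks can occupy both of those cells, so neither target is directly reachable, and your counting bound (which only guarantees \emph{some} free neighbour) no longer points at a useful cell. Your stated invariant, that the maker's marked polyomino is an ancestor of some element of $\mathcal{W}_{n}$, is far too weak to rescue this: it is necessary but says nothing about whether the target can still be forced against the breaker.

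The paper closes this gap with an argument that is not a counting estimate at all. First, it does not simply extend whenever an endpoint is free; it pushes in one fixed direction and shows that the breaker is \emph{forced} to spend both marks on the two perpendicular neighbours of each new cell (otherwise the maker bends immediately). This produces $L_{k}$ in a very controlled configuration in which $2k+1$ of the breaker's $2k+2$ marks are pinned down. The decisive step is then a \emph{two independent threats} argument: from this configuration the maker can launch a fresh line in either of two directions whose relevant cells are disjoint, each of which (by the same inductive mechanism) would eventually yield $S_{n+1}$ or some $C_{j}$/$Z_{j}$; the breaker's single unaccounted mark can spoil at most one of them. Your sketch lacks both ingredients: the controlled $L_{k}$ configuration (your row may stall with both endpoints blocked and the breaker's marks distributed unhelpfully), and the disjoint-threats endgame that replaces counting once a bend has occurred.
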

\begin{figure}
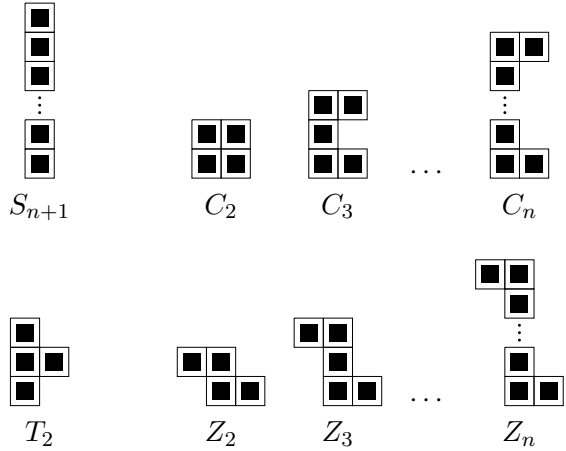

\begin{tabular}{cccccccc}
\includegraphics{6-1a.eps}&
&
&
&
\includegraphics{4-4.eps}&
\includegraphics{5-5.eps}&
$\cdots$&
\includegraphics{C-5a.eps}\tabularnewline
$S_{n+1}$&
&
&
&
$C_{2}$&
$C_{3}$&
&
$C_{n}$\tabularnewline
&
&
&
&
&
&
&
\tabularnewline
\includegraphics{4-3.eps}&
&
&
&
\includegraphics{Z-2.eps}&
\includegraphics{Z-3.eps}&
$\cdots$&
\includegraphics{Z-5a.eps}\tabularnewline
$T_{2}$&
&
&
&
$Z_{2}$&
$Z_{3}$&
&
$Z_{n}$\tabularnewline
\end{tabular}

\caption{\label{cap:Infinite-winner}The winner $\mathcal{W}_{n}$.}
\end{figure}

\begin{proof}
The maker can mark one of the polyominoes in $\mathcal{F}_{4}=\{ S_{4},L_{3},T_{2},C_{2},Z_{2}\}$
after four marks by Proposition~\ref{pro:The-full-family}. If this
polyomino is $T_{2}$, $C_{2}$ or $Z_{2}$ then the maker achieved
$\mathcal{W}_{n}$ and we are done. 

First consider the case when the marked polyomino is $S_{4}$. We
show by induction that even in this case the maker is able to achieve
$S_{n+1}$ and win or achieve $L_{k}$ for some $4\le k\le n$. Consider
Figure~\ref{cap:cases}(a) that shows the situation before the fifth
move of the maker. If the breaker has no marks in the cells containing
the letter A, then the maker can mark one of those cells and achieve
$T_{2}$. If the breaker has no marks in the cells containing the
letter B then the maker can mark one of those cells and achieve $L_{4}$.
So we can assume that the eight marks of the breaker are the cells
with the letters A and B. This completes the base step of the induction.
Now assume that we are in the situation shown in Figure~\ref{cap:cases}(b)
where the the maker already marked $S_{j-1}$ and the small empty
squares show the marks of the breaker. The maker now can mark the
cell containing the letter A. If the breaker does not answer by marking
the two cells containing the letter B then the maker can mark one
of these cells and achieve $L_{j}$. On the other hand if the breaker
marks these two cells then we are again in the situation shown in
Figure~\ref{cap:cases}(b) but the size of the polyomino $S_{j}$
marked by the maker is increased by one. Hence the maker eventually
achieves $S_{n+1}$ or $L_{k}$.

\begin{figure}
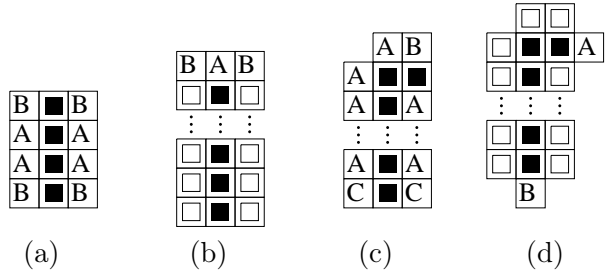

\begin{tabular}{ccccccc}
\includegraphics{sit1.eps}&
&
\includegraphics{sit2.eps}&
&
\includegraphics{sit3.eps}&
&
\includegraphics{sit4.eps}\tabularnewline
(a)&
&
(b)&
&
(c)&
&
(d)\tabularnewline
\end{tabular}

\caption{\label{cap:cases}Situations to achieve $\mathcal{W}_{n}$.}
\end{figure}

It suffices to consider the situation shown in Figure~\ref{cap:cases}(c)
where the maker marked $L_{k}$ after $k+1$ marks. If the breaker
has no marks in the cells containing the letter A, then the maker
can mark one of those cells and achieve $T_{2}$. If the breaker has
no mark in the cell containing the letter B, then the maker can mark
that cell and achieve $Z_{2}$. If the breaker has no marks in the
cells containing the letter C, then the maker can mark one of those
cells and achieve $C_{k}$ or $Z_{k}$. So we can assume that we are
in the situation shown in Figure~\ref{cap:cases}(d). Note that the
breaker can have $2k+2$ marks on the board while only $2k+1$ of
those marks are shown as forced moves. Without this extra mark, the
maker would have two ways to finish the game. He could mark the cell
containing the letter A and mark cells to the right of his previous
mark until he can make a turn up or down. He could also mark the cell
containing the letter B and mark cells below his previous mark until
he make a turn left or right. An inductive argument similar to the
one above shows that either way he can achieve $S_{n+1}$ without
a turn or he can achieve $C_{j}$ or $Z_{j}$ for some $3\le j\le n$.
The one extra mark of the breaker cannot ruin both of these ways to
win since the cells involved are disjoint.
\end{proof}
\begin{cor}
\label{cor:The-transfinite-family}The unbounded team \[
\mathcal{W}=\{ S_{\infty},T_{2}\}\cup\{ C_{n}\mid n\ge2\}\cup\{ Z_{n}\mid n\ge2\}\]
 is a winner.
\end{cor}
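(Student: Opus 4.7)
The plan is to invoke the definition of a winner for unbounded sets directly: I would show that every bounded restriction of $\mathcal{W}$ is a winner by comparing it in the simpler-than preorder to some $\mathcal{W}_n$, and then apply Proposition~\ref{pro:W_n} and Proposition~\ref{pro:order} to finish.

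First I would fix an arbitrary bounded restriction $\mathcal{F} = \{F_T \mid T \in \mathcal{W}\}$. By definition each $F_T$ is a finite ancestor of $T$, so in particular $F_T \sqsubseteq T$. The only infinite element of $\mathcal{W}$ is $S_\infty$, and any finite connected subset of the infinite skinny strip is congruent to $S_k$ for some $k \ge 1$, so $F_{S_\infty} = S_k$.

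Next I would set $n = \max\{3, k-1\}$ so that Proposition~\ref{pro:W_n} guarantees $\mathcal{W}_n = \{S_{n+1}, T_2, C_2, \ldots, C_n, Z_2, \ldots, Z_n\}$ is a winner, and then verify $\mathcal{F} \preceq \mathcal{W}_n$ by exhibiting, for each $Q \in \mathcal{W}_n$, a member of $\mathcal{F}$ that is an ancestor of $Q$. For $Q = S_{n+1}$ use $F_{S_\infty} = S_k$, which satisfies $S_k \sqsubseteq S_{n+1}$ because $k \le n+1$. For $Q = T_2$ use $F_{T_2}$. For each $Q = C_j$ and $Q = Z_j$ with $2 \le j \le n$ use $F_{C_j}$ and $F_{Z_j}$ respectively. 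Proposition~\ref{pro:order} then gives that $\mathcal{F}$ is a winner, and since $\mathcal{F}$ was an arbitrary bounded restriction, $\mathcal{W}$ itself is a winner.

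There is essentially no obstacle here; the corollary is a packaging lemma for Proposition~\ref{pro:W_n}. The only minor point is noticing that $S_\infty$ forces the skinny entry of $\mathcal{W}_n$ to be tuned to the particular finite ancestor $S_k$ chosen in the restriction, and that the index $n$ can always be enlarged to meet both the bound $n \ge 3$ required by Proposition~\ref{pro:W_n} and the bound $n+1 \ge k$ required for the ancestor comparison.
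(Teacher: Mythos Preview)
Your argument is correct and is essentially the same as the paper's: the paper's proof is the single sentence ``The bounded restrictions of $\mathcal{W}$ are all simpler than $\mathcal{W}_n$ for some $n$,'' and you have simply unpacked this by explicitly choosing $n=\max\{3,k-1\}$ and verifying the $\preceq$ relation elementwise.
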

\begin{proof}
The bounded restrictions of $\mathcal{W}$ are all simpler than $\mathcal{W}_{n}$
for some $n$.
\end{proof}
\begin{cor}
\label{cor:Small_winners}The teams $\{ P_{2,1}\}$, $\{ P_{n,1},P_{3,2}\}$
for $n\ge3$ and $\{ P_{3,1},P_{4,4},P_{4,5}\}$ are winners.
\end{cor}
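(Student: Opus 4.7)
The plan is to invoke Proposition~\ref{pro:order} to reduce each of the three teams to one that has already been shown to win, rather than devising any new strategies. The previously established winners I will lean on are the full teams $\mathcal{F}_{2}$ and $\mathcal{F}_{4}$ (from Proposition~\ref{pro:The-full-family}) and $\mathcal{W}_{n}$ for $n\ge 3$ (from Proposition~\ref{pro:W_n}). What remains is only to check a short list of ancestor relations by inspecting Figure~\ref{cap:Polyominoes-up-to}.

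For $\{P_{2,1}\}$ the team is literally $\mathcal{F}_{2}$, so Proposition~\ref{pro:The-full-family} applies directly. For $\{P_{n,1},P_{3,2}\}=\{S_{n},L_{2}\}$ with $n\ge 3$, I would verify that $\{S_{n},L_{2}\}\preceq\mathcal{W}_{n}$: the relation $S_{n}\sqsubseteq S_{n+1}$ takes care of the skinny member of $\mathcal{W}_{n}$, while the L-tromino $L_{2}$ sits inside $T_{2}$ and inside every $C_{k}$ and $Z_{k}$ for $k\ge 2$, taking care of all the remaining members. For $\{P_{3,1},P_{4,4},P_{4,5}\}=\{S_{3},C_{2},Z_{2}\}$, I would verify $\{S_{3},C_{2},Z_{2}\}\preceq\mathcal{F}_{4}$: the straight tromino $S_{3}$ is an ancestor of $S_{4}$, of $L_{3}$, and of $T_{2}$ because each of those tetrominoes contains three collinear cells, while $C_{2}$ and $Z_{2}$ cover themselves in $\mathcal{F}_{4}$.

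The main obstacle, to the extent there is one, is just the combinatorial bookkeeping of the ancestor relations above; no new achievement strategy needs to be constructed because all of the strategic work has already been carried out in Propositions~\ref{pro:The-full-family} and~\ref{pro:W_n}, and Proposition~\ref{pro:order} does the rest. So the statement really does follow as a corollary of the preceding results.
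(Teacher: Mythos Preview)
Your argument is correct and follows essentially the same route as the paper: invoke Proposition~\ref{pro:order} by exhibiting, for each team, a previously established winner that it is simpler than. The only difference is cosmetic---the paper compares all three teams to $\mathcal{W}_{3}$ and $\mathcal{W}_{n-1}$, whereas you use $\mathcal{F}_{2}$, $\mathcal{W}_{n}$, and $\mathcal{F}_{4}$; your choice of $\mathcal{W}_{n}$ rather than $\mathcal{W}_{n-1}$ for the second team is in fact slightly cleaner, since it avoids appealing to $\mathcal{W}_{2}$ when $n=3$.
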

\begin{proof}
The first and the third team is simpler than $\mathcal{W}_{3}$. The
second team is simpler than $\mathcal{W}_{n-1}$. 
\end{proof}
Note that $\mathcal{W}_{2}$ is not a team but $\mathcal{L}(\mathcal{W}_{2})=\{ S_{3},C_{2},Z_{2}\}=\{ P_{3,1},P_{4,4},P_{4,5}\}$
is a winning team and so $\mathcal{W}_{2}$ is a winning set.

\begin{cor}
There is a winning team of size $s$ for all $s\in\mathbf{N}\setminus\{4\}$.
\end{cor}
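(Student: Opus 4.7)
The plan is to exhibit an explicit winning team of each size $s\in\mathbf{N}\setminus\{4\}$ by combining the constructions already established. For $s\in\{1,2,3\}$, Corollary~\ref{cor:Small_winners} directly supplies the teams $\{P_{2,1}\}$, $\{P_{3,1},P_{3,2}\}$, and $\{P_{3,1},P_{4,4},P_{4,5}\}$. For each even $s=2n$ with $n\ge 3$, Proposition~\ref{pro:W_n} provides $\mathcal{W}_{n}$, and a direct count gives $|\mathcal{W}_n|=1+1+(n-1)+(n-1)=2n$, covering $s\in\{6,8,10,\dots\}$.

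The remaining cases are the odd values $s=2n-1$ with $n\ge 3$, which cover $s\in\{5,7,9,\dots\}$. For these I would adjoin the polyomino $L_n$ to $\mathcal{W}_n$ and pass to the legalization. Since $\mathcal{W}_n\cup\{L_n\}$ contains the winner $\mathcal{W}_n$ as a subset, it is simpler than $\mathcal{W}_n$, so Proposition~\ref{pro:order} shows it is a winner; its legalization is then a winning team by Proposition~\ref{pro:legal}. The shapes $C_n$ and $Z_n$ arise in the proof of Proposition~\ref{pro:W_n} precisely by adjoining a single cell to $L_n$, so $L_n\sqsubseteq C_n$ and $L_n\sqsubseteq Z_n$, and these two elements fail to be minimal in $\mathcal{W}_n\cup\{L_n\}$.

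The main step is the shape comparison showing that $L_n$ is incomparable with every \emph{other} element of $\mathcal{W}_n$, since this determines the size of the legalization. Specifically, $L_n$ and $S_{n+1}$ have the same number of cells but one has a corner and the other does not; $L_n$ is unbranched while $T_2$ branches, ruling out either relation in the tetromino case $n=3$ and also for larger $n$; and for $2\le k<n$ the shapes $C_k$ and $Z_k$ each contain two corners while $L_n$ has only one, which together with a cell-count argument rules out an ancestor relation in either direction. Once this is verified, exactly $C_n$ and $Z_n$ are removed when legalizing, giving $|\mathcal{L}(\mathcal{W}_n\cup\{L_n\})|=2n-2+1=2n-1$. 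Combining the three constructions realizes every size in $\{1,2,3\}\cup\{5,6,7,\dots\}=\mathbf{N}\setminus\{4\}$, which completes the proof.
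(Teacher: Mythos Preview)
Your argument is correct, but it takes a different route from the paper for the odd sizes $s\ge 5$. The paper simply adjoins the pentomino $P_{5,6}$ to $\mathcal{W}_n$; since $P_{5,6}$ is incomparable with every member of $\mathcal{W}_n$, the set $\mathcal{W}_n'=\mathcal{W}_n\cup\{P_{5,6}\}$ is already a team of size $2n+1$, and the paper then handles the lone remaining case $s=5$ separately via the full tetromino team $\mathcal{F}_4$. You instead adjoin $L_n$, which \emph{is} comparable with $C_n$ and $Z_n$, and then legalize to drop those two and land on size $2n-1$. Your approach costs more bookkeeping (the incomparability of $L_n$ with $S_{n+1}$, $T_2$, and the shorter $C_k,Z_k$ must all be checked, and your ``two corners'' remark is slightly loose for $C_2$, though the containment still fails since $L_n$ is one cell wide), but it buys uniformity: the single family $\mathcal{L}(\mathcal{W}_n\cup\{L_n\})$ already covers $s=5$ without invoking $\mathcal{F}_4$, and indeed for $n=3$ your construction yields exactly $\mathcal{F}_4=\{S_4,L_3,T_2,C_2,Z_2\}$.
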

\begin{proof}
The teams in Corollary~\ref{cor:Small_winners} are of size 1, 2
and 3. The team in Proposition~\ref{pro:The-full-family} has size
5. It is clear that $\mathcal{W}_{n}'=\mathcal{W}_{n}\cup\{ P_{5,6}\}$
is a team for $n\ge3$ (see Figure~\ref{cap:P_{5,6}.}). $\mathcal{W}_{n}'$
is a winner since it is simpler than $\mathcal{W}_{n}$. Since $|\mathcal{W}_{n}|=2n$
and $|\mathcal{W}_{n}'|=2n+1$, we have a winning team of size $s$
for all $s\ge6$.
\end{proof}
\begin{figure}
\begin{center}\includegraphics{5-6.eps}\end{center}

\caption{\label{cap:P_{5,6}.}$P_{5,6}$.}
\end{figure}

\section{Losing teams}

\begin{defn}
A $2$-\emph{paving} of the board is an irreflexive relation on the
set of cells where each cell is related to at most two other cells. 
\end{defn}
\begin{example}
Figure~\ref{cap:2-pavings.} visualizes some 2-pavings. Related cells
are connected by a tile. The dark cells show a fundamental set of
tiles. All the tiles are translations of the dark tiles by a linear
combination of the two given vectors with integer coefficients.%
\begin{figure}
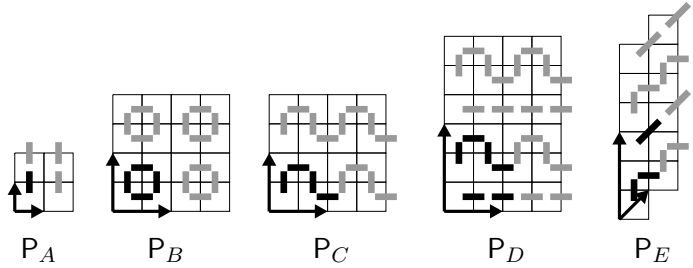

\begin{tabular}{ccccc}
\includegraphics{tileA.eps}&
\includegraphics{tileB.eps}&
\includegraphics{tileC.eps}&
\includegraphics{tileD.eps}&
\includegraphics{tileE.eps}\tabularnewline
$\mathsf{P}_{A}$&
$\mathsf{P}_{B}$&
$\mathsf{P}_{C}$&
$\mathsf{P}_{D}$&
$\mathsf{P}_{E}$\tabularnewline
\end{tabular}

\caption{\label{cap:2-pavings.}2-pavings. Each picture shows four copies
of the fundamental set of tiles. }
\end{figure}
A 2-paving determines the following strategy for the breaker. In each
turn, the breaker marks the unmarked cells related to the cell last
marked by the maker. If there are fewer than two such cells then she
uses her remaining marks randomly. 
\end{example}
\begin{defn}
The strategy described above is called the \emph{paving strategy}
based on a 2-paving.
\end{defn}
\begin{prop}
\label{pro:If-the-breaker}If the breaker follows the paving strategy
then the maker cannot mark two related cells during a game.
\end{prop}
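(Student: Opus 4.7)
The statement is really a clean bookkeeping fact, so I would prove it by contradiction together with a ``first offending move'' argument. Suppose, toward contradiction, that during some play against a breaker following the paving strategy, the maker does mark two related cells, and let $b$ be the first such cell the maker marks. Then there exists a cell $a$, related to $b$, that was already marked by the maker before $b$. In particular $a$ was marked on some earlier turn of the maker, and by the minimality of $b$ no two maker-marks made before $b$ are related to each other.

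Next I would examine the breaker's response immediately after $a$ was marked. Since $a$ is the cell last marked by the maker at that moment, the paving strategy prescribes that the breaker mark all currently unmarked cells related to $a$. Because the 2-paving relates $a$ to at most two other cells, the breaker has enough moves available to do this (any leftover moves are spent randomly, which is irrelevant here). Now consider the status of $b$ just before the maker marked $a$. There are two cases: either $b$ was already marked (by the maker or by the breaker), or $b$ was unmarked. In the first case $b$ is marked before the maker's turn on which he supposedly marks $b$, which is impossible since cells are marked only once. In the second case, $b$ is related to $a$ and unmarked, so the breaker marks $b$ as part of her response, and again $b$ is no longer available for the maker.

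Either way we obtain the contradiction that the maker cannot subsequently place a mark at $b$, which completes the proof.

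I do not foresee a real obstacle here; the only subtle point is making sure that ``the cell last marked by the maker'' in the paving strategy is unambiguous in the $(1,2)$ setting (it is, because the maker marks exactly one cell per turn) and that the breaker indeed has the capacity to cover all related neighbors of $a$ in a single reply, which is guaranteed by the defining property that each cell is related to at most two others.
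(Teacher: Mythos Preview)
Your proof is correct and follows essentially the same approach as the paper's: both argue that once the maker places a mark at $a$, the breaker's two replies cover every unmarked cell related to $a$, so no such cell---in particular $b$---is ever available to the maker afterward. The paper phrases this as ``if an empty cell $c$ is related to a maker cell $d$ at the maker's turn, then $c$ should already have been taken by the breaker,'' which is the same contradiction you reach; your explicit first-offending-move setup and two-case split are harmless extra scaffolding (Case~1 is in fact vacuous given that $a$ precedes $b$), but the core idea is identical.
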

\begin{proof}
Suppose that it is the maker's turn and there is an empty cell $c$
related to the cell $d$ marked by the maker. But then cell $c$ was
empty after the maker marked cell $d$. So the breaker should have
been able to use one of her two marks on cell $c$ since cell $d$
is not related to more than two other cells. This is a contradiction.
\end{proof}
This result allows the breaker to win against certain sets of polyominoes.

\begin{defn}
If $\mathsf{P}$ is a $2$-paving such that every placement of the
polyomino $Q$ on the board contains a pair of related cells then
we say that $Q$ is \emph{killed} by $\mathsf{P}$. If every element
of a set $\mathcal{S}$ of polyominoes is killed by a 2-paving $\mathsf{P}$
then we say that $\mathcal{S}$ is killed by $\mathsf{P}$.
\end{defn}
Note that if $P\sqsubseteq Q$ and $P$ is killed by a $2$-paving,
then $Q$ is also killed by the same 2-paving. The following is an
easy consequence of Proposition~\ref{pro:If-the-breaker}.

\begin{prop}
A set of polyominoes killed by a $2$-paving is a losing set, the
breaker wins with the paving strategy.
\end{prop}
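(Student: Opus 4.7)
The plan is to invoke Proposition~\ref{pro:If-the-breaker} directly: the paving strategy guarantees that the maker never occupies any two related cells, and the killing hypothesis guarantees that every copy of any $Q\in\mathcal{S}$ contains such a related pair. Combining these two facts shows the maker cannot mark every cell of any placement of any $Q\in\mathcal{S}$, which is exactly what losing means for the maker in the weak game.

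Concretely, I would first have the breaker commit to the paving strategy based on the given $2$-paving $\mathsf{P}$. Second, I would quote Proposition~\ref{pro:If-the-breaker} to conclude that throughout the game, no two cells marked by the maker are related under $\mathsf{P}$. Third, I would argue by contradiction: if the maker ever marks a placement of some $Q\in\mathcal{S}$, then because $Q$ is killed by $\mathsf{P}$, that placement contains a pair of $\mathsf{P}$-related cells, both of which would then be marked by the maker, contradicting the previous step.

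The only delicate point is to note that the paving strategy is always well-defined, even though the definition allows the breaker to use leftover marks randomly when fewer than two cells are related to the maker's last mark; since those extra random marks only further constrain the maker, they do not threaten the conclusion of Proposition~\ref{pro:If-the-breaker}. In short, there is no real obstacle here: the statement is essentially a restatement of Proposition~\ref{pro:If-the-breaker} combined with the definition of a killed set, and the proof is a one-line contradiction argument.
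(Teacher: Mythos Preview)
Your proposal is correct and matches the paper's approach exactly: the paper does not even give a formal proof, stating only that the proposition is ``an easy consequence of Proposition~\ref{pro:If-the-breaker},'' which is precisely the one-line contradiction argument you spell out.
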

\begin{example}
Figure~\ref{cap:Polyominoes-Killers} shows the polyominoes up to
size $4$ with their killer 2-pavings. The table helps decide if a
team is a loser. For example $\{ S_{3},C_{2}\}$ is a loser because
it is killed by $\mathsf{P}_{C}$.

\begin{figure}
\begin{tabular}{|c|c|c|c|c|c|}
\hline 
&
$\mathsf{P}_{A}$&
$\mathsf{P}_{B}$&
$\mathsf{P}_{C}$&
$\mathsf{P}_{D}$&
$\mathsf{P}_{E}$\tabularnewline
\hline 
$S_{3}$&
&
$\bullet$&
$\bullet$&
&
\tabularnewline
\hline 
$L_{2}$&
$\bullet$&
&
&
&
\tabularnewline
\hline 
$S_{4}$&
&
$\bullet$&
$\bullet$&
$\bullet$&
$\bullet$\tabularnewline
\hline 
$L_{3}$&
$\bullet$&
$\bullet$&
$\bullet$&
&
$\bullet$\tabularnewline
\hline 
$T_{2}$&
$\bullet$&
$\bullet$&
$\bullet$&
$\bullet$&
\tabularnewline
\hline 
$C_{2}$&
$\bullet$&
&
$\bullet$&
$\bullet$&
$\bullet$\tabularnewline
\hline 
$Z_{2}$&
$\bullet$&
$\bullet$&
&
$\bullet$&
$\bullet$\tabularnewline
\hline
\end{tabular}

\caption{\label{cap:Polyominoes-Killers}Polyominoes and their killer 2-pavings.
$S_{1}$ and $S_{2}$ are not listed since those polyominoes are winners.}
\end{figure}

\end{example}
It is easy but tedious to check that a given 2-paving in fact kills
a polyomino. We used a computer program to verify our hand calculations. 

We used another computer program to find useful killer 2-pavings.
This program uses backtracking to pick more and more related cells
to find a 2-paving that kills a set of polyominoes on a finite region
of the board. The program places every polyomino inside the finite
region in every position that does not have a pair of related cells
yet. If one of these placements does not have two cells that can be
made related then the program backtracks. Otherwise the program picks
the placement that has the least number of cells that can be made
related and tries to consider every such pairing. The program stops
if the set cannot be killed by a 2-paving or if a killer 2-paving
is found. If a set cannot be killed by a 2-paving on a finite region
then of course it cannot be killed on the infinite board either. In
this case the set is called a \emph{paving winner}. The 2-pavings
found by the program are often chaotic at the boundary of the finite
region, but in most cases a pattern or sometimes several patterns
can be discovered in some portion of a sufficiently large region.

\begin{prop}
There is a losing team of size $s$ for all $s\in\mathbf{N}\cup\{\infty\}$.
\end{prop}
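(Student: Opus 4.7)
The plan is to produce, for each team size $s\in\mathbf{N}\cup\{\infty\}$, a single uniform family of polyominoes that is simultaneously an antichain under $\sqsubseteq$ (so it is a team) and killed by one fixed 2-paving (so, by the preceding proposition, it is a loser). The finite-$s$ cases will be obtained as initial segments of the $s=\infty$ family.

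The family I propose is the hollow squares $R_n$ for $n\ge 3$, where $R_n$ consists of the $4n-4$ boundary cells of an $n\times n$ block. Two facts will be essentially immediate. First, every $R_n$ contains a row of $n\ge 3$ consecutive cells, so $S_3\sqsubseteq R_n$; since $\mathsf{P}_C$ kills $S_3$ (Figure~\ref{cap:Polyominoes-Killers}) and killers are inherited upward in $\sqsubseteq$, the paving $\mathsf{P}_C$ kills each $R_n$. Second, $R_n\not\sqsubseteq R_m$ whenever $n>m$, simply by comparing cell counts. Granting also that $R_m\not\sqsubseteq R_n$ for $m<n$, the team $\{R_3,R_4,\ldots,R_{s+2}\}$ will be a losing team of size $s$ for every finite $s\ge 1$, and the whole family $\{R_n:n\ge 3\}$ will give $s=\infty$.

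The real work, and the step I expect to be the main obstacle, is showing $R_m\not\sqsubseteq R_n$ for $2\le m<n$. My plan here is geometric: an embedding of $R_m$ as a sub-polyomino of $R_n$ would in particular place the four corner cells of $R_m$ at four cells of $R_n$ arranged at the corners of an axis-aligned $m\times m$ square. Since every cell of $R_n$ lies on one of the four outer sides of the ambient $n\times n$ boundary, I will do a short case analysis on which side(s) of $R_n$ each of the four candidate corners can inhabit, and show the only possibility is $m=n$. Once this is settled, $\{R_n:n\ge 3\}$ is verified to be an antichain and the construction is complete, producing losing teams of every size in $\mathbf{N}\cup\{\infty\}$.
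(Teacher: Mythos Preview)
Your proposal is correct but differs from the paper's one-line argument, which uses the family $\{C_2,\ldots,C_{s+1}\}$ (and $\{C_n : n\ge 2\}$ for $s=\infty$): each $C_n$ contains $L_2$ and is therefore killed by $\mathsf{P}_A$, and the antichain property of the $C_n$ is taken for granted. Your hollow-square family runs on the same principle---a fixed small ancestor (here $S_3$) killed by a single paving (here $\mathsf{P}_C$)---but you are more explicit about verifying $\sqsubseteq$-incomparability. The corner case analysis you outline does succeed; a slightly quicker variant is to use a whole side rather than just the corners: the bottom row of an embedded copy of $R_m$ is a run of $m\ge 3$ collinear cells inside $R_n$, hence must lie entirely in one of the two full rows $i\in\{1,n\}$ of $R_n$, and likewise the top row, forcing those two rows of $R_n$ to be at distance $m-1$, i.e.\ $m=n$. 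Even shorter: the cell-adjacency graph of $R_n$ is a single cycle of length $4n-4$, which has no proper subgraph that is a cycle, whereas an embedded $R_m$ with $m<n$ would be exactly such a cycle. The paper's choice buys brevity only because the $C_n$ are already introduced elsewhere; your construction is equally valid and arguably more self-contained.
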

\begin{proof}
The teams $\{ C_{2},\ldots,C_{s+1}\}$ and $\{ C_{2},C_{3},\ldots\}$
are killed by $\mathsf{P}_{A}$.
\end{proof}
\begin{prop}
\label{pro:If-S-isnot}If $\mathcal{F}$ is a winning team then $S_{n}\in\mathcal{F}$
for some $n$.
\end{prop}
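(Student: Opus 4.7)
The plan is to prove the contrapositive: if $\mathcal{F}$ contains no skinny polyomino, then $\mathcal{F}$ is a losing team. The loss will be witnessed by one of the killer 2-pavings already listed in Figure~\ref{cap:Polyominoes-Killers}, combined with Proposition~\ref{pro:order}.

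The heart of the argument is a short combinatorial observation: every finite polyomino $P$ with $|P| \geq 3$ that is not congruent to some $S_n$ contains $L_2$ as an ancestor. Indeed, since $P$ is connected but does not lie in a single row or column, a path in $P$ joining two cells with distinct rows and distinct columns must change direction at some cell $c$; then $c$ together with its two path-neighbors (one horizontal, one vertical) forms a copy of $L_2$ inside $P$, so $L_2 \sqsubseteq P$. Under the hypothesis that no $S_n$ lies in $\mathcal{F}$, every element of $\mathcal{F}$ has size at least three (since $S_1$ and $S_2$ are the only polyominoes of size $\leq 2$) and none is straight, so the observation applies to every $P \in \mathcal{F}$, giving $\{L_2\} \preceq \mathcal{F}$.

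From the table in Figure~\ref{cap:Polyominoes-Killers}, $L_2$ is killed by the 2-paving $\mathsf{P}_A$, so $\{L_2\}$ is a losing set; Proposition~\ref{pro:order} then forces $\mathcal{F}$ to be a loser as well. The only real content is the combinatorial observation, which is essentially the statement \emph{a connected non-straight polyomino must contain a bend}; everything else is routine bookkeeping with the preorder and the existing killer table.
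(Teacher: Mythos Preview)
Your argument is correct and follows essentially the same route as the paper: assume no $S_n$ lies in $\mathcal{F}$, deduce $\{L_2\}\preceq\mathcal{F}$, and invoke the killer $\mathsf{P}_A$ together with Proposition~\ref{pro:order}. The paper's proof is the same two-line skeleton, leaving the ``non-straight implies $L_2\sqsubseteq P$'' step implicit; you simply spell out that bend argument in full.
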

\begin{proof}
If $S_{n}$ is not in $\mathcal{F}$ for any $n$ then $\{ L_{2}\}\preceq\mathcal{F}$.
Hence $\mathcal{F}$ is a loser since $L_{2}$ is killed by $\mathsf{P}_{A}$.
\end{proof}
\begin{prop}
A set $\mathcal{S}$ containing polyominoes of size $5$ or larger
only is a loser.
\end{prop}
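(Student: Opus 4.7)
The plan is to exhibit a single 2-paving $\mathsf{P}$ that kills every polyomino of size at least $5$. Once such a $\mathsf{P}$ is in hand, every element of $\mathcal{S}$ is automatically killed by $\mathsf{P}$, and so the paving strategy based on $\mathsf{P}$ wins for the breaker. The first step is a structural reduction: a polyomino of size at least $5$ is either skinny, in which case it equals $S_n$ for some $n\ge 5$ and has $S_5$ as an ancestor, or else it contains at least one bend, in which case it has $L_2$ as an ancestor. Since any extension of a killed polyomino is itself killed by the same paving, it suffices to produce one 2-paving that simultaneously kills both $S_5$ and $L_2$.

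Next I would hunt for such a paving. The paving $\mathsf{P}_A$ of Figure~\ref{cap:2-pavings.} already kills $L_2$, so the first thing to check by hand is whether it also kills $S_5$, despite failing to kill $S_3$ and $S_4$. If it does, the proof is complete. Otherwise I would specialize the backtracking search described in the preceding paragraphs to the two-element set $\{S_5,L_2\}$ and display the resulting fundamental tile together with its two period vectors as the proof certificate; the search is guaranteed to terminate on any sufficiently large finite window.

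The main obstacle is this construction step. The constraint imposed by $S_5$ is qualitatively different from those imposed by the bent polyominoes already handled in Figure~\ref{cap:Polyominoes-Killers}: since $S_5$ has no bend, every related pair inside a placement of $S_5$ must consist of two collinear cells, and such pairs must be forced to occur within every five consecutive cells along every row and column. Reconciling this with the simultaneous obligation to catch every occurrence of $L_2$ is what makes the paving nontrivial to find. Once a candidate is in hand, verification is a routine finite check over the placements intersecting one fundamental domain, of the same sort implicitly carried out to produce the table in Figure~\ref{cap:Polyominoes-Killers}.
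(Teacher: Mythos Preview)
Your reduction to the pair $\{S_5,L_2\}$ is where the argument breaks. That pair is a \emph{winner}: by Corollary~\ref{cor:Small_winners} every team $\{S_n,L_2\}$ with $n\ge 3$ is a winner (and this is reaffirmed in Corollary~\ref{cor:Size_2_winners}). Consequently no $2$-paving can kill $\{S_5,L_2\}$, for if one did, the paving strategy would make the breaker win and the set would be a loser. So neither $\mathsf{P}_A$ nor the output of any backtracking search will ever give you the certificate you are looking for; the search on a large enough finite window will simply report ``paving winner'' for this set. The structural reduction ``size $\ge 5$ $\Rightarrow$ contains $S_5$ or $L_2$'' is correct, but it is too coarse to be useful here: it reduces a losing set to a winning one, which is the wrong direction for Proposition~\ref{pro:order}.

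The paper's proof uses a sharper reduction. One checks that every polyomino of size at least $5$ contains either $S_3$ or $Z_2$: if it avoids $S_3$ then every connected $4$-cell subpolyomino must be $C_2$ or $Z_2$, and $C_2$ cannot be extended by a single adjacent cell without creating $S_3$, so $Z_2$ must occur. Hence $\{S_3,Z_2\}\preceq\mathcal{S}$. From Figure~\ref{cap:Polyominoes-Killers} the paving $\mathsf{P}_B$ kills both $S_3$ and $Z_2$, so $\{S_3,Z_2\}$ is a loser, and Proposition~\ref{pro:order} finishes the argument. The point is that the reduction target has to be chosen with the available killer pavings in mind, not merely for the simplicity of the structural step.
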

\begin{proof}
It is easy to see that $\mathcal{F}:=\{ S_{3},Z_{2}\}\preceq\mathcal{S}$
and $\mathcal{F}$ is killed by $\mathsf{P}_{B}$.
\end{proof}

\section{Classification of teams}

In this section we find all winning teams up to size 4. For each such
size $s$ we present a characterizing winning team $\mathcal{Y}_{s}$.
Then we show that a team $\mathcal{F}$ of size $s$ is a winner if
and only if it is simpler then $\mathcal{Y}_{s}$. To do this we use
a characterizing collection $\mathcal{N}_{s,1},\ldots,\mathcal{N}_{s,k_{s}}$
of losing teams and we show that if $\mathcal{F}$ is not simpler
than $\mathcal{Y}_{s}$ then there is a losing team in $\mathcal{N}_{s,i}$
that is simpler than $\mathcal{F}$. For size 4 teams we do not have
a characterizing winner since there are no size 4 winning teams. These
characterizing teams are shown in Figures~\ref{cap:Characterizing-1}--\ref{cap:Characterizing-4}.
Each $\mathcal{Y}_{i}$ is simpler than $\mathcal{W}$ of Corollary~\ref{cor:The-transfinite-family}
and so a winner. To show that the characterizing losing teams are
in fact losers, we provide killer 2-pavings in the figures.

\begin{prop}
$\mathcal{Y}_{1}=\{ S_{2}\}$, $\mathcal{N}_{1,1}=\{ S_{3}\}$ and
$\mathcal{N}_{1,2}=\{ L_{2}\}$ is a characterizing collection of
winners and losers for size $1$ teams.%
\begin{figure}
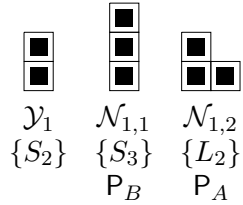

\begin{tabular}{ccc}
\includegraphics{2-1.eps}&
\includegraphics{3-1.eps}&
\includegraphics{3-2.eps}\tabularnewline
$\mathcal{Y}_{1}$&
$\mathcal{N}_{1,1}$&
$\mathcal{N}_{1,2}$\tabularnewline
$\{ S_{2}\}$&
$\{ S_{3}\}$&
$\{ L_{2}\}$\tabularnewline
&
$\mathsf{P}_{B}$&
$\mathsf{P}_{A}$\tabularnewline
\end{tabular}

\caption{\label{cap:Characterizing-1}Characterizing families for size 1.
Killer 2-pavings are listed for losing families.}
\end{figure}

\end{prop}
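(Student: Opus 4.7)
The plan is to verify three things in order: that $\mathcal{Y}_1$ is a winner, that both $\mathcal{N}_{1,1}$ and $\mathcal{N}_{1,2}$ are losers, and that every size~$1$ team $\mathcal{F}$ satisfies the dichotomy $\mathcal{F}\preceq\mathcal{Y}_1$ or $\mathcal{N}_{1,i}\preceq\mathcal{F}$ for some $i$.

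First, $\mathcal{Y}_1=\{S_2\}=\{P_{2,1}\}$ is recorded in Corollary~\ref{cor:Small_winners} as a winner, so by Proposition~\ref{pro:order} every team simpler than $\mathcal{Y}_1$ is a winner. For the two losing teams, Figure~\ref{cap:Polyominoes-Killers} exhibits killers: $S_3$ is killed by $\mathsf{P}_B$ and $L_2$ by $\mathsf{P}_A$. The paving strategy therefore shows that both $\mathcal{N}_{1,1}$ and $\mathcal{N}_{1,2}$ are losers, and Proposition~\ref{pro:order} propagates this to any team that has one of them simpler than it.

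For the completeness of the case split, I would take a size~$1$ team $\mathcal{F}=\{P\}$ with $\mathcal{F}\not\preceq\mathcal{Y}_1$ and exhibit an $i$ with $\mathcal{N}_{1,i}\preceq\mathcal{F}$. Since $\{P\}\preceq\{S_2\}$ unwinds to $P\sqsubseteq S_2$, which forces $P\in\{S_1,S_2\}$, the remaining polyominoes $P$ split into two subcases. If $P$ is skinny, i.e.\ $P=S_n$ for some $n\geq 3$ (or $P=S_\infty$), then $S_3\sqsubseteq P$, so $\mathcal{N}_{1,1}\preceq\mathcal{F}$. If $P$ is not skinny, then $P$ contains a bend and therefore $L_2\sqsubseteq P$, so $\mathcal{N}_{1,2}\preceq\mathcal{F}$; this is the same observation already used in the proof of Proposition~\ref{pro:If-S-isnot}.

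The one substantive geometric point is the claim that every non-skinny polyomino contains a copy of $L_2$; this can be pinned down by walking along a shortest path of adjacent cells between two cells that lie in different rows and different columns, and extracting the first right-angle turn along the way. Beyond that, the proof is assembly: the 2-paving table supplies the killers, Corollary~\ref{cor:Small_winners} supplies the winner, and Proposition~\ref{pro:order} carries both classifications outward to every size~$1$ team.
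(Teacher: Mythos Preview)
Your proof is correct and follows essentially the same line as the paper's: both reduce the dichotomy to the observation that any polyomino outside $\{S_1,S_2\}$ has at least three cells and hence contains a copy of $S_3$ (if skinny) or of $L_2$ (otherwise). The only difference is cosmetic: the paper invokes an external citation to assert that $\{S_1\}$ and $\{S_2\}$ are the only single-polyomino winners, whereas you establish the winner/loser status of $\mathcal{Y}_1$ and the $\mathcal{N}_{1,i}$ from the paper's own Corollary~\ref{cor:Small_winners} and the killer $2$-pavings---which is in fact already done in the text preceding the proposition, so your first two paragraphs are redundant but not wrong.
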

\begin{proof}
By \cite{sieben:wild}, the only size 1 winners are $\{ S_{1}\}$
and $\{ S_{2}\}$. Both of these are simpler than $\mathcal{Y}_{1}$.
Every other polyomino $P$ has at least 3 cells and so either $S_{3}$
or $L_{2}$ must be simpler then $P$. 
\end{proof}
\begin{prop}
$\mathcal{Y}_{2}=\{ S_{\infty},L_{2}\}$, $\mathcal{N}_{2,1}=\{ L_{2}\}$,
$\mathcal{N}_{2,2}=\{ S_{3},C_{2}\}$ and $\mathcal{N}_{2,3}=\{ S_{3},Z_{2}\}$
is a characterizing collection of winners and losers for size $2$
teams. %
\begin{figure}
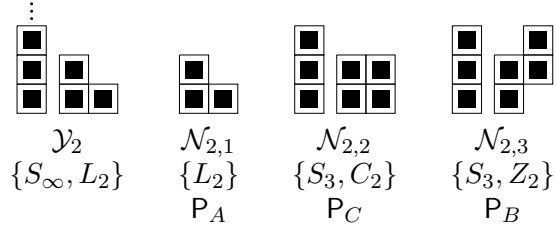

\begin{tabular}{ccccccc}
\includegraphics{omega-1.eps}~\includegraphics{3-2.eps}&
&
\includegraphics{3-2.eps}&
&
\includegraphics{3-1.eps}~\includegraphics{4-4.eps}&
&
\includegraphics{3-1.eps}~\includegraphics{4-5.eps}\tabularnewline
$\mathcal{Y}_{2}$&
&
$\mathcal{N}_{2,1}$&
&
$\mathcal{N}_{2,2}$&
&
$\mathcal{N}_{2,3}$\tabularnewline
$\{ S_{\infty},L_{2}\}$&
&
$\{ L_{2}\}$&
&
$\{ S_{3},C_{2}\}$&
&
$\{ S_{3},Z_{2}\}$\tabularnewline
&
&
$\mathsf{P}_{A}$&
&
$\mathsf{P}_{C}$&
&
$\mathsf{P}_{B}$\tabularnewline
\end{tabular}

\caption{\label{cap:Characterizing-2}Characterizing families for size 2.
Killer 2-pavings are listed for losing families.}
\end{figure}

\end{prop}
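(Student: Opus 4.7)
The plan is to verify the claimed status of $\mathcal{Y}_{2}$ and of each $\mathcal{N}_{2,i}$, and then to show that an arbitrary size-$2$ team $\mathcal{F}$ either satisfies $\mathcal{F}\preceq\mathcal{Y}_{2}$ (so that $\mathcal{F}$ is a winner by Proposition~\ref{pro:order}) or has some $\mathcal{N}_{2,i}\preceq\mathcal{F}$ (so that $\mathcal{F}$ is a loser by the same proposition). For $\mathcal{Y}_{2}$ I would verify $\mathcal{Y}_{2}\preceq\mathcal{W}$: the element $S_{\infty}$ is shared, and each of $T_{2}$, $C_{n}$, and $Z_{n}$ in $\mathcal{W}$ has a bend and so contains $L_{2}$; Corollary~\ref{cor:The-transfinite-family} then yields that $\mathcal{Y}_{2}$ is a winner. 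The losing status of the three $\mathcal{N}_{2,i}$ is read off Figure~\ref{cap:Polyominoes-Killers}: $\mathsf{P}_{A}$ kills $L_{2}$, $\mathsf{P}_{C}$ kills both $S_{3}$ and $C_{2}$, and $\mathsf{P}_{B}$ kills both $S_{3}$ and $Z_{2}$.

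For the classification, let $\mathcal{F}$ be an arbitrary team with $|\mathcal{F}|=2$. If $\mathcal{F}$ contains no skinny polyomino then each of its two elements has a bend, hence contains $L_{2}$, so $\mathcal{N}_{2,1}=\{L_{2}\}\preceq\mathcal{F}$ and $\mathcal{F}$ is a loser. Otherwise one element of $\mathcal{F}$ is $S_{k}$ for some $k$, and the team condition forces the other element $Q$ to be non-skinny. If $k\le 2$, then $S_{k}\sqsubseteq S_{\infty}$ and $S_{k}\sqsubseteq L_{2}$, so the single element $S_{k}$ already witnesses $\mathcal{F}\preceq\mathcal{Y}_{2}$. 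If $k\ge 3$ and $Q=L_{2}$, then $\mathcal{F}=\{S_{k},L_{2}\}\preceq\mathcal{Y}_{2}$ directly.

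The only remaining case is $k\ge 3$ together with $Q$ non-skinny of size at least $4$, since the only non-skinny polyomino of size three is $L_{2}$. I would finish it using the following observation, which I expect to be the one step requiring actual argument: every non-skinny polyomino of size at least $4$ contains $S_{3}$, $C_{2}$, or $Z_{2}$ as a sub-polyomino. Granting this and using $S_{3}\sqsubseteq S_{k}$, we obtain either $\mathcal{N}_{2,2}\preceq\mathcal{F}$ or $\mathcal{N}_{2,3}\preceq\mathcal{F}$, so $\mathcal{F}$ is a loser. To prove the observation I would note that $Q$ has a bend and hence contains $L_{2}$, and that by connectedness of $Q$ we may append one more cell of $Q$ adjacent to this $L_{2}$ to obtain a connected size-$4$ sub-polyomino still containing the bend; such a size-$4$ polyomino must be one of $L_{3}$, $T_{2}$, $C_{2}$, $Z_{2}$, and the first two contain $S_{3}$ along their long arm or crossbar while the last two are themselves on the target list.
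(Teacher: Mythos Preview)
Your proof is correct and follows essentially the same route as the paper's: both reduce to the dichotomy ``no skinny element'' versus $\mathcal{F}=\{S_n,Q\}$ with $Q$ non-skinny, then separate $Q=L_2$ (winner) from $|Q|\ge 4$ (loser via $\mathcal{N}_{2,2}$ or $\mathcal{N}_{2,3}$). The only packaging difference is that the paper splits the last case into $|Q|=4$ (enumeration) and $|Q|\ge 5$ (reduction to a size-$4$ ancestor $R$), whereas you consolidate both into the single observation that any non-skinny polyomino of size at least~$4$ contains $S_3$, $C_2$, or $Z_2$; your handling of the vacuous $k\le 2$ case is unnecessary but harmless, since no size-$2$ team can contain $S_1$ or $S_2$.
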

\begin{proof}
Let $\mathcal{F}$ be a team of size 2. If $S_{n}$ is not in $\mathcal{F}$
then $\mathcal{N}_{2,1}\preceq\mathcal{F}$ by the proof of Proposition~\ref{pro:If-S-isnot}.
So we can assume that $\mathcal{F}=\{ S_{n},Q\}$ for some $n\ge3$.
Note that if $n\le2$ then $\mathcal{F}$ cannot be a team. 

First assume that $|Q|\le4$. Then $Q\in\{ L_{2},L_{3},T_{2},C_{2},Z_{2}\}$
since $S_{i}$ is related to $S_{n}$. If $Q=L_{2}$ then $\mathcal{F}\preceq\mathcal{Y}$.
If $Q\in\{ L_{3},T_{2}\}$ then $\mathcal{N}_{2,2},\mathcal{N}_{2,3}\preceq\mathcal{F}$.
If $Q=C_{2}$ then $\mathcal{N}_{2,2}\preceq\mathcal{F}$. If $Q=Z_{2}$
then $\mathcal{N}_{2,3}\preceq\mathcal{F}$.

Next assume that $|Q|\ge5$. Then $Q$ is not skinny and so there
is an $R\in\{ L_{2},L_{3},T_{2},C_{2},Z_{2}\}$ such that $R\sqsubseteq Q$.
Hence $\{ S_{n},R\}\preceq\mathcal{F}$ and so $\mathcal{F}$ is characterized
since $\{ S_{n},R\}$ is characterized as we saw in the previous case.
\end{proof}
\begin{cor}
\label{cor:Size_2_winners}The only winning size $2$ teams are $\{ S_{\infty},L_{2}\}$
and $\{ S_{n},L_{2}\}$ for $n\ge3$.
\end{cor}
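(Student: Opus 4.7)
The plan is to read the corollary off directly from the preceding proposition. That proposition, together with Proposition~\ref{pro:order}, shows that a cardinality-$2$ team $\mathcal{F}$ is a winner if and only if $\mathcal{F}\preceq\mathcal{Y}_2=\{S_\infty,L_2\}$, since any size-$2$ team not simpler than $\mathcal{Y}_2$ has one of $\mathcal{N}_{2,1},\mathcal{N}_{2,2},\mathcal{N}_{2,3}$ simpler than it and so is a loser. So the task reduces to enumerating every $2$-element team $\mathcal{F}=\{A,B\}$ with $\mathcal{F}\preceq\{S_\infty,L_2\}$.

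First I would eliminate $S_1$ and $S_2$ as members of any size-$2$ team: $S_1\sqsubseteq X$ for every polyomino $X$, and $S_2\sqsubseteq X$ whenever $|X|\ge 2$, so the only potential partner of $S_1$ or $S_2$ under the incomparability condition for a team would force one element to be an ancestor of the other. Hence both elements of $\mathcal{F}$ have size at least $3$.

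Next I would unpack the relation $\mathcal{F}\preceq\{S_\infty,L_2\}$: some element of $\mathcal{F}$ is an ancestor of $L_2$ and some (possibly the same) element is an ancestor of $S_\infty$. The only polyominoes that sit inside $L_2$ as connected subregions are $S_1,S_2,L_2$; the first two are ruled out by the previous paragraph, so one element of $\mathcal{F}$ must equal $L_2$. The ancestors of $S_\infty$ are exactly the straight polyominoes $\{S_n : n\ge 1\}\cup\{S_\infty\}$, and $L_2$ is not among them because it has a bend. Thus the other element is $S_n$ for some $n$ or $S_\infty$, and again the size restriction forces $n\ge 3$. Finally, each $\{S_n,L_2\}$ with $n\ge 3$ and $\{S_\infty,L_2\}$ is genuinely a team because $L_2$ has a bend so $L_2\not\sqsubseteq S_n$ and $L_2\not\sqsubseteq S_\infty$, while $S_n\not\sqsubseteq L_2$ follows from $|S_n|\ge|L_2|=3$ together with $S_3\ne L_2$.

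There is no real obstacle here beyond bookkeeping; the only point where care is needed is making sure the ancestor enumerations for $L_2$ and for $S_\infty$ are exhaustive, and that the team condition does not trim the resulting list any further.
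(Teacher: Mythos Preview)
Your proposal is correct and follows exactly the approach the paper intends: the corollary is stated in the paper without proof, as an immediate reading of the preceding characterization proposition, and your argument simply spells out that reading by enumerating the size-$2$ teams $\mathcal{F}$ with $\mathcal{F}\preceq\mathcal{Y}_2$.
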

\begin{prop}
$\mathcal{Y}_{3}=\{ S_{3},C_{2},Z_{2}\}$, $\mathcal{N}_{3,1}=\{ L_{2}\}$,
$\mathcal{N}_{3,2}=\{ S_{3},Z_{2}\}$, $\mathcal{N}_{3,3}=\{ S_{3},C_{2},P_{5,10}\}$
and $\mathcal{N}_{3,4}=\{ S_{4},C_{2},Z_{2}\}$ is a characterizing
collection of winners and losers for size $3$ teams. %
\begin{figure}
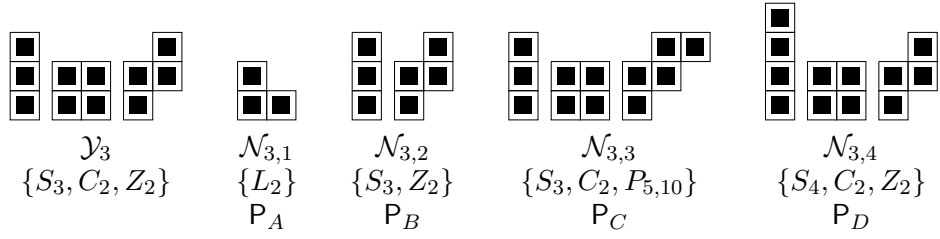

\begin{tabular}{ccccccccc}
\includegraphics{3-1.eps}~\includegraphics{4-4.eps}~\includegraphics{4-5.eps}&
&
\includegraphics{3-2.eps}&
&
\includegraphics{3-1.eps}~\includegraphics{4-5.eps}&
&
\includegraphics{3-1.eps}~\includegraphics{4-4.eps}~\includegraphics{5-10.eps}&
&
\includegraphics{4-1.eps}~\includegraphics{4-4.eps}~\includegraphics{4-5.eps}\tabularnewline
$\mathcal{Y}_{3}$&
&
$\mathcal{N}_{3,1}$&
&
$\mathcal{N}_{3,2}$&
&
$\mathcal{N}_{3,3}$&
&
$\mathcal{N}_{3,4}$\tabularnewline
$\{ S_{3},C_{2},Z_{2}\}$&
&
$\{ L_{2}\}$&
&
$\{ S_{3},Z_{2}\}$&
&
$\{ S_{3},C_{2},P_{5,10}\}$&
&
$\{ S_{4},C_{2},Z_{2}\}$\tabularnewline
&
&
$\mathsf{P}_{A}$&
&
$\mathsf{P}_{B}$&
&
$\mathsf{P}_{C}$&
&
$\mathsf{P}_{D}$\tabularnewline
\end{tabular}

\caption{\label{cap:Characterizing-3}Characterizing families for size 3.
Killer 2-pavings are listed for losing families.}
\end{figure}

\end{prop}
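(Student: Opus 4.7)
My plan is to follow the template used for the size-$2$ case: enumerate all size-$3$ teams $\mathcal{F}$ and show that each is either simpler than $\mathcal{Y}_3$ (hence a winner by Proposition~\ref{pro:order}) or has some $\mathcal{N}_{3,i}$ simpler than it (hence a loser by the same proposition). The four characterizing losing teams are certified losers by the killer $2$-pavings indicated in Figure~\ref{cap:Characterizing-3}, and $\mathcal{Y}_3 \preceq \mathcal{W}$ establishes that $\mathcal{Y}_3$ is a winner via Corollary~\ref{cor:The-transfinite-family}.

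First I reduce to a canonical form. By the argument used in Proposition~\ref{pro:If-S-isnot}, if $\mathcal{F}$ contains no $S_n$ then $\{L_2\} = \mathcal{N}_{3,1} \preceq \mathcal{F}$. Otherwise write $\mathcal{F} = \{S_n, Q_1, Q_2\}$ with $n \geq 3$ and $Q_1, Q_2$ non-skinny. The team property forces $L_2 \notin \mathcal{F}$: every non-skinny polyomino of size $\geq 3$ has $L_2$ as an ancestor, so the presence of $L_2$ alongside another non-skinny element would contradict minimality. Hence $|Q_1|, |Q_2| \geq 4$.

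The core technical ingredient is a small structure lemma for non-skinny polyominoes of size $\geq 4$. Any such $Q$ contains $L_2$, and appending any cell of $Q$ adjacent to that $L_2$ yields a non-skinny four-cell sub-polyomino; hence $Q$ has an ancestor in $\{L_3, T_2, C_2, Z_2\}$. This lets me partition such $Q$ into three classes: $\mathcal{A}$ if $S_3 \sqsubseteq Q$, $\mathcal{C}$ if $C_2 \sqsubseteq Q$ but $S_3 \not\sqsubseteq Q$, and $\mathcal{Z}$ if $Z_2 \sqsubseteq Q$ while neither $S_3$ nor $C_2$ is an ancestor. Two auxiliary observations drive the classification. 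First, $\mathcal{C} = \{C_2\}$, because each of the eight cells adjacent to a $2\times 2$ block produces a length-three straight run when appended. Second, every $Q \in \mathcal{Z}$ with $|Q| \geq 5$ has $P_{5,10}$ as an ancestor: choose a cell of $Q$ adjacent to a fixed $Z_2$-sub, and note that the resulting five-cell polyomino is $S_3$-free and $C_2$-free, which forces it to equal $P_{5,10}$.

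With the partition in hand, the classification is a short case split on the pair $(Q_1, Q_2)$. If both lie in $\mathcal{A}$, or if one lies in $\mathcal{A}$ and the other in $\mathcal{Z}$, or if both lie in $\mathcal{Z}$ (necessarily with at least one strictly larger than $Z_2$), then every element of $\mathcal{F}$ has an ancestor in $\{S_3, Z_2\}$ and so $\mathcal{N}_{3,2} \preceq \mathcal{F}$. If one of $Q_1, Q_2$ is in $\mathcal{A}$ and the other equals $C_2$, or if one equals $C_2$ and the other is a $\mathcal{Z}$-element of size $\geq 5$, then $\mathcal{N}_{3,3} \preceq \mathcal{F}$ (using $P_{5,10}$ in the latter subcase). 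The only remaining case is $\{Q_1, Q_2\} = \{C_2, Z_2\}$: for $n = 3$ this is $\mathcal{F} = \mathcal{Y}_3$, a winner; for $n \geq 4$ one checks directly that $\mathcal{N}_{3,4} = \{S_4, C_2, Z_2\} \preceq \mathcal{F}$. I expect the main obstacle to be the clean verification of the two auxiliary observations, particularly the claim about $P_{5,10}$, which requires confirming that the only five-cell polyomino containing $Z_2$ and avoiding both $S_3$ and $C_2$ is in fact $P_{5,10}$.
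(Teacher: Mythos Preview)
Your argument is correct and is organized differently from the paper's. The paper splits first on the sizes of $Q$ and $R$: when both have size $4$ it enumerates the six pairs from $\{L_3,T_2,C_2,Z_2\}$; when $|R|\ge 5$ it passes to the legalization $\mathcal{E}=\mathcal{L}(\{S_n\}\cup\mathcal{S})$ of a set of size-$4$ ancestors, branches on $|\mathcal{E}|$, invokes the size-$2$ classification for $|\mathcal{E}|=2$, and in the delicate subcase $\mathcal{E}=\mathcal{Y}_3$ analyzes the four size-$5$ descendants of $Z_2$ individually. Your route replaces all of this with a single ancestor-based trichotomy $\mathcal{A}/\mathcal{C}/\mathcal{Z}$ together with the two structural facts $\mathcal{C}=\{C_2\}$ and ``$\mathcal{Z}$-elements of size $\ge 5$ contain $P_{5,10}$''. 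These two facts are exactly the content distilled from the paper's Figure~\ref{cap:Size-5-descend} analysis, so the underlying combinatorics is the same; what you gain is a uniform treatment that never needs the legalization step or the appeal to Corollary~\ref{cor:Size_2_winners}. What the paper's approach buys is a template (reduce to $\mathcal{E}$, branch on $|\mathcal{E}|$) that it reuses verbatim for the size-$4$ proposition, whereas your trichotomy is tailored to the specific shape of $\mathcal{Y}_3$ and $\mathcal{N}_{3,3}$. One cosmetic point: ``contradict minimality'' should read ``violate the team property''.
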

\begin{proof}
Let $\mathcal{F}$ be a team of size $3$. If $S_{n}$ is not in $\mathcal{F}$
then $\mathcal{N}_{3,1}\preceq\mathcal{F}$. So assume $\mathcal{F}=\{ S_{n},Q,R\}$
for some $n\ge3$. We do not have $L_{2}\in\mathcal{F}$ because every
polyomino is related to $S_{n}$ or $L_{2}$. Thus $|Q|,|R|\ge4$.

First consider the case when $|Q|=4=|R|$. Then $\{ Q,R\}\subseteq\{ L_{3},T_{2},C_{2},Z_{2}\}$.
If $\{ Q,R\}=\{ L_{3},T_{2}\}$ then $\mathcal{N}_{3,2}\preceq\{ S_{3}\}\preceq\mathcal{F}$.
If $Q\in\{ L_{3},T_{2}\}$ and $R=C_{2}$ then $\mathcal{N}_{3,3}\preceq\{ S_{3},C_{2}\}\preceq\mathcal{F}$.
If $Q\in\{ L_{3},T_{2}\}$ and $R=Z_{2}$ then $\mathcal{N}_{3,2}\preceq\mathcal{F}$.
If $\{ Q,R\}=\{ C_{2},Z_{2}\}$ then $n=3$ implies $\mathcal{F}=\mathcal{Y}_{3}$
and $n\ge4$ implies $\mathcal{N}_{3,4}\preceq\mathcal{F}$.

Next consider the case when $|Q|\ge4$ and $|R|\ge5$. Since $Q$
and $R$ are not skinny, there is an $\mathcal{S}\subseteq\{ P_{4,2},P_{4,3},P_{4,4},P_{4,5}\}$
with $|\mathcal{S}|\le2$ such that $\mathcal{S}\preceq\{ Q,R\}$.
Then $\mathcal{E}=\mathcal{L}(\{ S_{n}\}\cup\mathcal{S})\preceq\{ S_{n}\}\cup\mathcal{S}\preceq F$
and $1\le|\mathcal{E}|\le3$.

If $|\mathcal{E}|=1$ then $\mathcal{N}_{3,2}\preceq\mathcal{E}\preceq\mathcal{F}$.
If $|\mathcal{E}|=2$ then $\mathcal{E}$ is a loser by Corollary~\ref{cor:Size_2_winners},
since $\mathcal{E}$ has a polyomino with size $4$. Hence $\mathcal{N}_{2,1}$,
$\mathcal{N}_{2,2}$ or $\mathcal{N}_{2,3}$ is simpler than $\mathcal{E}$.
We have $\mathcal{N}_{3,1}=\mathcal{N}_{2,1}$, $\mathcal{N}_{3,3}\preceq\mathcal{N}_{2,2}$
and $\mathcal{N}_{3,2}=\mathcal{N}_{2,3}$ which implies $\mathcal{N}_{3,i}\preceq\mathcal{N}_{2,j}\preceq\mathcal{E}\preceq\mathcal{F}$
for some $i$ and $j$ as desired.%
\begin{figure}
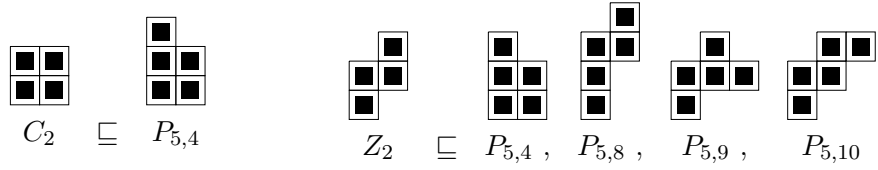

\begin{tabular}{ccc}
\includegraphics{4-4.eps}&
&
\includegraphics{5-4.eps}\tabularnewline
$C_{2}$&
$\sqsubseteq$&
$P_{5,4}$\tabularnewline
\end{tabular}$\qquad\qquad$\begin{tabular}{cccccc}
\includegraphics{4-5.eps}&
&
\includegraphics{5-4.eps}&
\includegraphics{5-8.eps}&
\includegraphics{5-9.eps}&
\includegraphics{5-10.eps}\tabularnewline
$Z_{2}$&
$\sqsubseteq$&
$P_{5,4}$~,&
$P_{5,8}$~,&
$P_{5,9}$~,&
$P_{5,10}$\tabularnewline
\end{tabular}

\caption{\label{cap:Size-5-descend}Descendants of $C_{2}$ and $Z_{2}$ with
size 5 .}
\end{figure}

Assume $|\mathcal{E}|=3$. If $\mathcal{E}\not=\mathcal{Y}_{3}$ then
$\mathcal{N}_{3,i}\preceq\mathcal{E}\preceq\mathcal{F}$ for some
$i$ by the first part of the proof. So it remains to consider the
case when $\mathcal{E}=\mathcal{Y}_{3}$. Then we must have an ancestor
$Q'$ of $Q$ and an ancestor $R'$ of $R$ such that $|Q'|=4$ and
$|R'|=5$. Figure~\ref{cap:Size-5-descend} shows the size 5 descendants
of $C_{2}$ and $Z_{2}$. From this we can see that either we have
$Q'=Z_{2}$ and $R'=P_{5,4}$ or we have $Q'=C_{2}$ and $R'\in\{ P_{5,4},P_{5,8},P_{5,9},P_{5,10}\}$.
In the first case $\mathcal{N}_{3,2}\preceq\{ S_{n},Z_{2},P_{5,4}\}\preceq\mathcal{F}$.
In the second case one of the following holds:\begin{align*}
\mathcal{N}_{3,3},\mathcal{N}_{3,4} & \preceq\{ S_{n},C_{2},P_{5,4}\}\preceq\mathcal{F}\\
\mathcal{N}_{3,4} & \preceq\{ S_{n},C_{2},P_{5,8}\}\preceq\mathcal{F}\qquad\text{($n\ge4$ since $S_{3}\sqsubseteq P_{5,8}$)}\\
\mathcal{N}_{3,4} & \preceq\{ S_{n},C_{2},P_{5,9}\}\preceq\mathcal{F}\text{\qquad($n\ge4$ since $S_{3}\sqsubseteq P_{5,9}$)}\\
\mathcal{N}_{3,3} & \preceq\{ S_{n},C_{2},P_{5,10}\}\preceq\mathcal{F}.\end{align*}
 
\end{proof}
We need a preliminary result before we can deal with size $4$ teams.
The polyominoes shown in Figure~\ref{cap:Squiggle-polyominoes.}
are called squiggle polyominoes.%
\begin{figure}
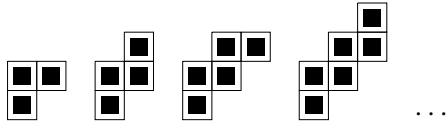

\begin{tabular}{ccccc}
\includegraphics{3-2a.eps}&
\includegraphics{4-5.eps}&
\includegraphics{5-10.eps}&
\includegraphics{6-31.eps}&
$\cdots$\tabularnewline
\end{tabular}

\caption{\label{cap:Squiggle-polyominoes.}Squiggle polyominoes.}
\end{figure}

\begin{prop}
\label{pro:no_size_3}A team $\mathcal{F}$ of size $4$ or more does
not have any polyominoes of size $3$ or less.
\end{prop}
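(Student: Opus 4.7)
The plan is to show $|\mathcal{F}| \le 3$ whenever $\mathcal{F}$ contains a polyomino $P$ of size at most $3$, by casework on $P \in \{S_1, S_2, S_3, L_2\}$. The cases $P = S_1$ and $P = S_2$ are immediate, since $S_1$ is an ancestor of every polyomino and $S_2$ is an ancestor of every polyomino of size at least $2$, forcing $\mathcal{F}$ to be a singleton. The case $S_3, L_2 \in \mathcal{F}$ is also immediate: every polyomino of size $\ge 3$ has $S_3$ or $L_2$ as an ancestor (build a connected triple out of three cells), so no third element can be in the team and $\mathcal{F} = \{S_3, L_2\}$.

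For $L_2 \in \mathcal{F}$ with $S_3 \notin \mathcal{F}$, the key structural lemma I would use is that \emph{every polyomino without $L_2$ as an ancestor is skinny}: if a cell had two perpendicular neighbors the three together would form an $L_2$, so each cell's neighbors lie along a single axis, and connectedness then forces the entire polyomino to be a straight strip. Since the skinny polyominoes $\{S_n \mid n \ge 1\} \cup \{S_\infty\}$ form a chain under $\sqsubseteq$, at most one of them can accompany $L_2$ in a team, so $|\mathcal{F}| \le 2$.

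The substantive case is $S_3 \in \mathcal{F}$ with $L_2 \notin \mathcal{F}$. Every other $Q \in \mathcal{F}$ then has no three-in-a-row, forcing $|Q| \ge 4$, and a size-$4$ ancestor of $Q$ obtained by repeatedly pruning leaves must also have no three-in-a-row, so it is $C_2$ or $Z_2$. I would then split on whether $C_2$ and $Z_2$ lie in $\mathcal{F}$: if both do, no additional element fits in the team (it would have one of them as an ancestor), giving $|\mathcal{F}| = 3$; if only one does, the remaining members must avoid that one while still having $C_2$ or $Z_2$ as an ancestor; and if neither does, any additional member has size $\ge 5$ together with the same avoidance constraint.

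The main obstacle in these last sub-cases is the structural lemma \emph{the polyominoes that have neither $S_3$ nor $C_2$ as ancestors form the single chain $L_2 \sqsubseteq Z_2 \sqsubseteq P_{5,10} \sqsubseteq P_{6,31} \sqsubseteq \cdots$ of squiggles}. I would prove this by induction on size, picking a leaf cell of such a polyomino, applying the induction hypothesis to what remains, and then checking that the only legal places to reattach a cell (those creating neither a three-in-a-row nor a $2\times 2$) lie at the two ends of the squiggle, giving a unique extension up to congruence. Combined with the elementary observation that $C_2$ itself admits no extension at all without producing a three-in-a-row, this yields at most one additional element of $\mathcal{F}$ in each remaining sub-case and therefore $|\mathcal{F}| \le 3$ throughout.
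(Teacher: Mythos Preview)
Your proof is correct and follows essentially the same approach as the paper's: both hinge on the structural fact that the polyominoes with no $S_3$ ancestor are exactly $C_2$ together with the chain of squiggles $L_2 \sqsubseteq Z_2 \sqsubseteq P_{5,10} \sqsubseteq \cdots$, from which $|\mathcal{F}|\le 3$ is immediate. The paper states this classification directly and finishes in one line, whereas you reach it via an intermediate reduction to size-$4$ ancestors and a further split on whether $C_2,Z_2\in\mathcal{F}$; that extra case analysis is harmless but unnecessary.
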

\begin{proof}
It is clear that the full team $\mathcal{F}_{s}$ cannot be extended
to a larger team. Hence neither $S_{1}$ nor $S_{2}$ can be a member
of $\mathcal{F}$. We cannot have both $S_{3}$ and $L_{2}$ in $\mathcal{F}$
either. 

If $L_{2}\in\mathcal{F}$ then all the other polyominoes in $\mathcal{F}$
must be skinny since the non-skinny polyominoes are related to $L_{2}$.
Only one skinny polyomino is allowed so this limits the size of $\mathcal{F}$
to 2.

Suppose that $S_{3}\in\mathcal{F}$. The only polyominoes not related
to $S_{3}$ are $C_{2}$ and the squiggle polyominoes. Any two squiggle
polyominoes are related so $\mathcal{F}$ cannot contain more than
one. This limits the size of $\mathcal{F}$ to 3.
\end{proof}
\begin{prop}
There are no winning teams with size $4$. $\mathcal{N}_{4,1}=\{ L_{2}\}$,
$\mathcal{N}_{4,2}=\{ S_{2},Z_{2}\}$, $\mathcal{N}_{4,3}=\{ S_{2},C_{2},P_{5,10}\}$,
$\mathcal{N}_{4,4}=\{ S_{4},L_{3},C_{2},Z_{2}\}$ and $\mathcal{N}_{4,5}=\{ S_{4},T_{2},C_{2},Z_{2}\}$
is a characterizing collection of losers for size $4$ teams.%
\begin{figure}
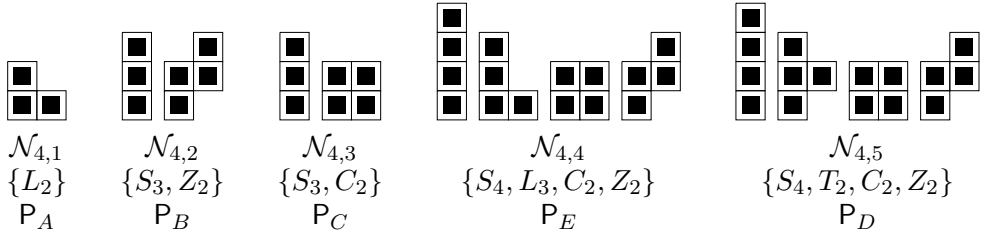

\begin{tabular}{ccccccccc}
\includegraphics{3-2.eps}&
&
\includegraphics{3-1.eps}~\includegraphics{4-5.eps}&
&
\includegraphics{3-1.eps}~\includegraphics{4-4.eps}&
&
\includegraphics{4-1.eps}~\includegraphics{4-2.eps}~\includegraphics{4-4.eps}~\includegraphics{4-5.eps}&
&
\includegraphics{4-1.eps}~\includegraphics{4-3.eps}~\includegraphics{4-4.eps}~\includegraphics{4-5.eps}\tabularnewline
$\mathcal{N}_{4,1}$&
&
$\mathcal{N}_{4,2}$&
&
$\mathcal{N}_{4,3}$&
&
$\mathcal{N}_{4,4}$&
&
$\mathcal{N}_{4,5}$\tabularnewline
$\{ L_{2}\}$&
&
$\{ S_{3},Z_{2}\}$&
&
$\{ S_{3},C_{2}\}$&
&
$\{ S_{4},L_{3},C_{2},Z_{2}\}$&
&
$\{ S_{4},T_{2},C_{2},Z_{2}\}$\tabularnewline
$\mathsf{P}_{A}$&
&
$\mathsf{P}_{B}$&
&
$\mathsf{P}_{C}$&
&
$\mathsf{P}_{E}$&
&
$\mathsf{P}_{D}$\tabularnewline
\end{tabular}

\caption{\label{cap:Characterizing-4}Characterizing teams for size 4. Killer
2-pavings are listed for losing teams. No winning team is required.}
\end{figure}

\end{prop}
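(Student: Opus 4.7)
The plan is to follow the template of the size~$2$ and size~$3$ proofs. First I verify that each $\mathcal{N}_{4,i}$ is a loser by checking that the $2$-paving listed in Figure~\ref{cap:Characterizing-4} kills all of its members (routine, in the same style as the table of Figure~\ref{cap:Polyominoes-Killers}). It then remains to show that every size-$4$ team $\mathcal{F}$ has $\mathcal{N}_{4,i}\preceq\mathcal{F}$ for some $i$, which by Proposition~\ref{pro:order} forces $\mathcal{F}$ to be a loser and so rules out any size-$4$ winning team.

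By Proposition~\ref{pro:no_size_3} every polyomino in $\mathcal{F}$ has size at least~$4$. If $\mathcal{F}$ contains no skinny polyomino, every element contains $L_{2}$ as an ancestor and $\mathcal{N}_{4,1}=\{L_{2}\}\preceq\mathcal{F}$, arguing exactly as in the proof of Proposition~\ref{pro:If-S-isnot}. Otherwise $\mathcal{F}=\{S_{n},Q,R,P\}$ with $n\ge 4$ and $Q,R,P$ non-skinny polyominoes of size at least~$4$.

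The enabling sub-lemma I would establish is that any non-skinny polyomino of size at least~$4$ has an ancestor in $\{L_{3},T_{2},C_{2},Z_{2}\}$; this follows by locating a bend (three cells forming an $L_{2}$) and analyzing where a fourth connecting cell may be placed, which produces one of the four non-skinny tetrominoes in every case. Choosing such ancestors $Q',R',P'$ gives $\mathcal{F}'=\{S_{n},Q',R',P'\}\preceq\mathcal{F}$, so by transitivity of $\preceq$ it suffices to locate $\mathcal{N}_{4,i}\preceq\mathcal{F}'$. This reduces the problem to the multiset $\{Q',R',P'\}$ drawn from the four-element universe $\{L_{3},T_{2},C_{2},Z_{2}\}$.

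The remainder is a finite verification split by $|\{Q',R',P'\}|\in\{1,2,3\}$, totalling $4+6+4=14$ sub-cases. In each I identify the matching loser: for the four $3$-element sub-cases, $\mathcal{N}_{4,3}$, $\mathcal{N}_{4,2}$, $\mathcal{N}_{4,4}$, $\mathcal{N}_{4,5}$ correspond to $\{L_{3},T_{2},C_{2}\}$, $\{L_{3},T_{2},Z_{2}\}$, $\{L_{3},C_{2},Z_{2}\}$, $\{T_{2},C_{2},Z_{2}\}$ respectively; in the $2$- and $1$-element sub-cases $\mathcal{N}_{4,2}$ or $\mathcal{N}_{4,3}$ absorbs anything with $S_{3}$ as an ancestor. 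The one mildly delicate sub-case is $\{Q',R',P'\}=\{C_{2},Z_{2}\}$, in which neither $\mathcal{N}_{4,2}$ nor $\mathcal{N}_{4,3}$ succeeds (because $C_{2}$ has no $Z_{2}$ ancestor and $Z_{2}$ has no $C_{2}$ ancestor), but $\mathcal{N}_{4,4}$ works using $S_{4}\sqsubseteq S_{n}$ since $n\ge 4$. The main obstacle is not conceptual but bookkeeping: managing the case enumeration cleanly, together with the short geometric argument for the ancestor sub-lemma.
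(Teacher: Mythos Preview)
Your proposal is correct and follows essentially the same approach as the paper: reduce via the non-skinny-tetromino ancestor lemma to $\{S_n\}\cup\mathcal{S}$ with $\mathcal{S}\subseteq\{L_3,T_2,C_2,Z_2\}$, then do a finite case check. The paper organizes the cases by $|\mathcal{E}|$ where $\mathcal{E}=\mathcal{L}(\{S_n\}\cup\mathcal{S})$ rather than by $|\{Q',R',P'\}|$, but this is purely cosmetic; your identification of $\{C_2,Z_2\}$ as the one sub-case genuinely requiring $n\ge4$ (and hence $\mathcal{N}_{4,4}$ or $\mathcal{N}_{4,5}$) matches the paper's treatment exactly.
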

\begin{proof}
Let $\mathcal{F}$ be a team of size $4$. If $S_{n}$ is not in $\mathcal{F}$
then $\mathcal{N}_{3,1}\preceq\mathcal{F}$. So assume $\mathcal{F}=\{ S_{n},P,Q,R\}$.
for some $n\ge3$. By Proposition~\ref{pro:no_size_3} we can assume
that $n,|P|,|Q|,|R|\ge4$. There is an $\mathcal{S}\subseteq\{ P_{4,2},\ldots,P_{4,5}\}$
with $|\mathcal{S}|\le3$ such that $\mathcal{S}\preceq\{ P,Q,R\}$.
Then $\mathcal{E}=\mathcal{L}(\{ S_{n}\}\cup\mathcal{S})\preceq\{ S_{n}\}\cup\mathcal{S}\preceq\mathcal{F}$
and $1\le|\mathcal{E}|\le4$. 

If $|\mathcal{E}|=1$ then $\mathcal{N}_{4,2},\mathcal{N}_{4,3}\preceq\mathcal{E}\preceq\mathcal{F}$.
If $|\mathcal{E}|=2$ then one of the following holds:\begin{align*}
\mathcal{N}_{4,2},\mathcal{N}_{4,3},\mathcal{N}_{4,4} & \preceq\{ S_{n},L_{3}\}=\mathcal{E\preceq\mathcal{F}}\\
\mathcal{N}_{4,2},\mathcal{N}_{4,3},\mathcal{N}_{4,5} & \preceq\{ S_{n},T_{2}\}=\mathcal{E\preceq\mathcal{F}}\\
\mathcal{N}_{4,3},\mathcal{N}_{4,4},\mathcal{N}_{4,5} & \preceq\{ S_{n},C_{2}\}=\mathcal{E\preceq\mathcal{F}}\\
\mathcal{N}_{4,2},\mathcal{N}_{4,4},\mathcal{N}_{4,5} & \preceq\{ S_{n},Z_{2}\}=\mathcal{E\preceq\mathcal{F}}.\end{align*}
If $|\mathcal{E}|=3$ then one of the following\begin{align*}
\mathcal{N}_{4,2},\mathcal{N}_{4,3} & \preceq\{ S_{n},L_{3,}P_{4,3}\}=\mathcal{E\preceq\mathcal{F}}\\
\mathcal{N}_{4,3},\mathcal{N}_{4,4} & \preceq\{ S_{n},L_{3},C_{2}\}=\mathcal{E\preceq\mathcal{F}}\\
\mathcal{N}_{4,2},\mathcal{N}_{4,4} & \preceq\{ S_{n},L_{3},Z_{2}\}=\mathcal{E\preceq F}\\
\mathcal{N}_{4,3},\mathcal{N}_{4,5} & \preceq\{ S_{n},T_{2},C_{2}\}=\mathcal{E\preceq\mathcal{F}}\\
\mathcal{N}_{4,2},\mathcal{N}_{4,5} & \preceq\{ S_{n},T_{2},Z_{2}\}=\mathcal{E\preceq\mathcal{F}}\\
\mathcal{N}_{4,4},\mathcal{N}_{4,5} & \preceq\{ S_{n},C_{2},Z_{2}\}=\mathcal{E\preceq\mathcal{F}}\end{align*}
holds. Finally if $|\mathcal{E}|=4$ then one of the following holds:\begin{align*}
\mathcal{N}_{4,3} & \preceq\{ S_{n},L_{3},T_{2},C_{2}\}=\mathcal{E\preceq\mathcal{F}}\\
\mathcal{N}_{4,2} & \preceq\{ S_{n},L_{3},T_{2},Z_{2}\}=\mathcal{E\preceq\mathcal{F}}\\
\mathcal{N}_{4,4} & \preceq\{ S_{n},L_{3},C_{2},Z_{2}\}=\mathcal{E\preceq\mathcal{F}}\\
\mathcal{N}_{4,5} & \preceq\{ S_{n},T_{2},C_{2},Z_{2}\}=\mathcal{E\preceq\mathcal{F}}.\end{align*}

\end{proof}
\begin{defn}
A team $\mathcal{Y}$ of polyominoes is called an $n$-\emph{super
winner} if each winning team with size at most $n$ is simpler than
$\mathcal{Y}$.
\end{defn}
\begin{example}
$\mathcal{Y}_{s}$ is an $s$-super winner for $s\in\{1,2\}$. $\mathcal{W}$
in Corollary~\ref{cor:The-transfinite-family} is a $4$-super winner.
\end{example}
The main result of our paper is the following.

\begin{thm}
A team of polyominoes containing fewer than $5$ polyominoes is a
winner if and only if it is simpler than $\mathcal{W}$.
\end{thm}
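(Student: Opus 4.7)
The theorem is essentially a corollary of the work already done. The plan is to handle the two directions separately and then assemble the classifications of sizes $1$ through $4$ into a single statement.

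For the easy direction, suppose $\mathcal{F}\preceq\mathcal{W}$. Since $\mathcal{W}$ is a winner by Corollary~\ref{cor:The-transfinite-family}, Proposition~\ref{pro:order} immediately gives that $\mathcal{F}$ is a winner. Note this direction needs no restriction on the size of $\mathcal{F}$.

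For the converse, I would let $\mathcal{F}$ be a winning team with $|\mathcal{F}|=s\in\{1,2,3,4\}$ and argue case-by-case, using the characterizing collections already established. The size $4$ case is immediate: the preceding proposition shows there are no winning teams of size $4$, so the claim is vacuous. For each of $s\in\{1,2,3\}$, the corresponding characterization proposition already shows that every winning size-$s$ team satisfies $\mathcal{F}\preceq\mathcal{Y}_{s}$. Thus by transitivity of $\preceq$ it suffices to establish $\mathcal{Y}_{s}\preceq\mathcal{W}$ for $s\in\{1,2,3\}$.

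The verification of $\mathcal{Y}_{s}\preceq\mathcal{W}$ is the only remaining content, and it is routine. I would check it polyomino by polyomino against the elements of $\mathcal{W}=\{S_{\infty},T_{2}\}\cup\{C_{n}\mid n\ge 2\}\cup\{Z_{n}\mid n\ge 2\}$. For $\mathcal{Y}_{1}=\{S_{2}\}$: every polyomino in $\mathcal{W}$ has at least two adjacent cells, so $S_{2}$ is an ancestor of each. For $\mathcal{Y}_{2}=\{S_{\infty},L_{2}\}$: use $S_{\infty}\sqsubseteq S_{\infty}$ and observe that $L_{2}$ (the bent triomino) is an ancestor of $T_{2}$ and of every $C_{n}$ and every $Z_{n}$ with $n\ge 2$. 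For $\mathcal{Y}_{3}=\{S_{3},C_{2},Z_{2}\}$: use $S_{3}\sqsubseteq S_{\infty}$, $S_{3}\sqsubseteq T_{2}$, $C_{2}\sqsubseteq C_{n}$, and $Z_{2}\sqsubseteq Z_{n}$.

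There is no serious obstacle in this argument; all of the real work, finding the winners $\mathcal{W}_{n}$ and the killer $2$-pavings in the characterizing collections, was carried out in the earlier sections. The only thing to be careful about is not accidentally invoking the size $\le 4$ assumption in the easy direction (the hypothesis is only needed to guarantee the classification propositions apply), and to state the conclusion in the form: every winning size-$s$ team ($s\le 4$) satisfies $\mathcal{F}\preceq\mathcal{Y}_{s}\preceq\mathcal{W}$, so $\mathcal{W}$ is a $4$-super winner, as was previewed in the example preceding the theorem.
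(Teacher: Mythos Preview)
Your proposal is correct and follows exactly the approach the paper intends: the theorem is stated without an explicit proof because it is an immediate corollary of the classification propositions together with the observation (made in the text introducing Figures~\ref{cap:Characterizing-1}--\ref{cap:Characterizing-4}) that each $\mathcal{Y}_{i}$ is simpler than $\mathcal{W}$, and of the example asserting that $\mathcal{W}$ is a $4$-super winner. Your explicit verification of $\mathcal{Y}_{s}\preceq\mathcal{W}$ for $s\in\{1,2,3\}$ simply fills in what the paper leaves to the reader.
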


\section{Further questions}

There are several questions to be answered about set games. 

\begin{enumerate}
\item The teams $\mathcal{Y}_{2}$ and $\mathcal{W}$ are infinite winners.
Both of these are unbounded. Is there an infinite winning team that
is bounded?
\item Even though there are no winning teams with size 4, we could say that
$\mathcal{Y}_{4}=\{ S_{1}\}$ is a characterizing winner for size
4 teams. So there is a characterizing winning team for sizes from
1 to 4. Is there a characterizing winner for each size?
\item Is there an $s$-super winner for each $s$? Is there a super winner
that is $s$-super for each $s$?
\item Is there a useful notion of a super loser?
\item Are there any characterizing or super winners in the unbiased or differently
biased set games played on triangular, hexagonal and higher dimensional
rectangular boards?
\end{enumerate}
\begin{center}\bibliographystyle{amsplain}
\bibliography{game}
\end{center}

\end{document}